\newtheorem{thm}{Theorem}[section]
\newtheorem{cor}[thm]{Corollary}
\newtheorem{lem}[thm]{Lemma}
\newtheorem{prop}[thm]{Proposition}
\theoremstyle{defn}
\newtheorem{defn}{Definition}[section]
\newtheorem{rem}{Remark}[section]
\newcommand{\eval}[2][\right]{\relax
  \ifx#1\right\relax \left.\fi#2#1\rvert}
\theoremstyle{plain}
\title{Rigidity for partially hyperbolic diffeomorphisms}
\author{R\'egis Var\~ao}
\newcommand{\tilc}{\til{\mathcal F}^c}
\newcommand{\til}{\widetilde}
\begin{document}

\maketitle

\begin{abstract}
In this work we completely classify $C^\infty$ conjugacy for conservative partially hyperbolic diffeomorphisms homotopic to a linear Anosov automorphism on the 3-torus by its center foliation behavior. We prove that the uniform version of absolute continuity for the center foliation is the natural hypothesis to obtain $C^\infty$ conjugacy to its linear Anosov automorphism. On a recent work Avila, Viana and Wilkinson proved that for a perturbation in the volume preserving case of the time-one map of an Anosov flow absolute continuity of the center foliation implies smooth rigidity. The absolute version of absolute continuity is the appropriate sceneario for our context since it is not possible to obtain an analogous result of Avila, Viana and Wilkinson for our class of maps, for absolute continuity alone fails miserably to imply smooth rigidity for our class of maps. Our theorem is a global rigidity result as we do not assume the diffeomorphism to be at some distance from the linear Anosov automorphism. We also do not assume ergodicity. In particular a metric condition on the center foliation implies ergodicity and $C^\infty$ center foliation.
\end{abstract}
\maketitle

\section{Introduction}

A fundamental problem in Dynamical Systems is the description of the orbits of elements under a given motion law, in our case this law will be given by diffeomorphisms. One of the most effective ways to do so is through classification, that is understanding a given dynamics by its relation with some other better known model.
A typical and important class of examples of Dynamical Systems are the hyperbolic diffeomorphisms, also known as Anosov diffeomorphisms. Anosov \cite{Anosov} proved that these maps (here we always assume at least $C^{1 + \alpha}$) are ergodic with respect to the volume measure. One of the key steps of his proof relies on the understanding of the behavior of the two associate invariant foliations, the stable and the unstable, which are uniformly contracting and uniformly expanding respectively.

These hyperbolic maps exhibit enough structure so that one can analyze them using some geometric and metric tools. A diffeomorphism $f:M \rightarrow M$ on a smooth manifold $M$ is said to be transitive if there exists $x \in M$ such that $\{f^n(x)\}_n$ is dense in $M$. We say that $f$ is robustly transitive if there exists a $C^1$ neighborhood of $f$ such that all diffeomorphisms in this neighborhood are transitive. One of the first examples of robustly transitive systems were the Anosov diffeomorphisms (see \cite{BDV} and references therein). It was then thought that hyperbolic diffeomorphisms were the only robustly transitive systems, in fact Ma\~n\'e \cite{mane.ECL} proved that $C^1$ robustly transitive diffeomorphims on a surface are the hyperbolic ones. But Shub \cite{shub.T4} and Ma\~n\'e \cite{mane.stability.conjecure} gave examples of robustly transitive systems which were not hyperbolic. Their systems fall on a larger class known as Partially Hyperbolic diffeomorphisms.

The partially hyperbolic diffeomorphisms are similar to Anosov diffeomorphism as they have expanding and contracting directions and the respectively associated stable and unstable foliations. The difference is that there is a center direction for a partially hyperbolic diffeomorphism which is dominated by the stable and unstable directions (see Definition \ref{defi:PH}). A major difficulty to understand these systems comes on predicting the behavior of the center direction since it may not exhibit any kind of hyperbolicity. And as one expects this brings new difficulties, but most important it gives rise to new dynamical behaviors.


The stable and unstable foliations of a partially hyperbolic diffeomophism are absolutely continuous, this means that for a set of full volume almost every leaf intersects this set in a full leaf-volume measure. That is, one should see absolute continuity of a foliation as a Fubini like behavior, which may also be seen as some sort of regularity condition of these foliations. In contrast, the center foliation may exhibit a new type of behavior, it may have atomic disintegration: there is a set of full volume which intersects each center leaf in a finite number of points. At first this might sound as a pathological behavior, but it turns out that this is in fact a common behavior for the center foliation \cite{AVWI,PTV,ruelle.wilkinson}. 

If one imposes some regularity on a center foliation, for instance absolute continuity as it happens for the stable and unstable foliations, one may get a very rigid structure. For instance Avila, Viana and Wilkinson \cite{AVWI} assuming absolute continuity of the center foliation for volume preserving perturbations of a time-one map of the geodesic flow on a constant negatively curved surface, they obtain that in fact this perturbation is a time-one map of a $C^\infty$ Anosov flow. 

%


So far the main results on the rigidity problem concerning the center foliation are the ones presented in \cite{AVWI}, as described above, and in \cite{varao.etds}. On \cite{varao.etds} it is studied the rigidity problem for the class of derived from Anosov systems, which are partially hyperbolic diffeomorphisms homotopic to a linear Anosov. These systems have a hyperbolic memory, more precisely they are semi-conjugate to a linear Anosov, also called as the linearization of this diffeomorphism (see \S \ref{sec:preliminaries} for details).

It is assumed on \cite{varao.etds} that the center foliation is $C^1$ and have transversely absolutely continuous foliations with bounded Jacobian, and the implication is $C^{1}$ conjugacy with a linear Anosov. Our main result largely extends this last result since it does not assume any smoothness from the center foliation, in fact it assumes an almost everywhere condition and, then, it implies a much stronger regularity, $C^\infty$. 

The low regularity assumption in our result is simply a uniform version of absolute continuity of a foliation. More precisely, the disintegration of volume on any foliated box has conditional measures with a uniformly bounded density with respect to the leaf-volume (see Definition \ref{defi:UBD}). Micena and Tahzibi \cite{micena.tahzibi.UBD} called this the UBD Property, our main result also answers their conjecture that UBD property would imply $C^1$ conjugacy, we obtain in fact $C^\infty$.

Transversely absolutely continuous foliations with bounded Jacobian means that the holonomies of transversal foliations have uniformly bounded Jacobian, one can easily check that this implies the UBD property (the proof follows from the appropriate modifications of  Proposition 6.2.2 of \cite{brin.stuck}).

In the context of Avila, Viana, Wilkinson \cite{AVWI} absolute continuity implies smoothness, in our context this is far from possible and one cannot drop the UBD property as, by \cite[Theorem 1.3]{varao.etds}, the center foliation may be $C^1$ and $f$ not even $C^1$ smoothly conjugate to its linearization. We point out that being a $C^1$ foliation implies to have locally uniform bounded densities. Hence our hypothesis (the uniform version of absolute continuity) is a natural one and, from the explained before, can be seen as a sharp condition. Our main result, Theorem \ref{main:rigidity.C}, in contrast to other results is a global rigidity result, i.e. one does not assume any proximity of the derived from Anosov to its linearization. We also point out that we do not assume ergodicity.

We now state our main result:
\begin{thm}\label{main:rigidity.C}
Let $f:\mathbb{T}^3 \rightarrow \mathbb T^3$ be a conservative partially hyperbolic diffeomorphism homotopic to a linear Anosov. The center foliation has the Uniform Bounded Density property if and only if  $f$ is $C^{\infty}$ conjugate to its linearization.
\end{thm}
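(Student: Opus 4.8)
One direction is routine: if $h$ is a $C^{\infty}$ conjugacy with $h\circ f=A\circ h$ (where $A$ is the linearization), then the center foliation of $f$ is $h^{-1}$ applied to the linear, hence $C^{\infty}$, center foliation of $A$; it therefore admits $C^{\infty}$ foliated charts, so the disintegration of volume in any foliated box has continuous conditional density, bounded above and (since the chart is a diffeomorphism) below on each compact plaque, and a finite subcover of $\mathbb{T}^{3}$ produces one uniform constant. So UBD holds, and all the work is in the converse.

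Assume now the center foliation $\mathcal{F}^{c}$ has the UBD property. The plan is to proceed in three stages. \emph{Stage 1 (ergodicity):} UBD makes $\mathcal{F}^{c}$ absolutely continuous, and together with the automatic absolute continuity of $\mathcal{F}^{s}$, $\mathcal{F}^{u}$ and of the $cs$, $cu$ foliations (which exist by dynamical coherence of derived-from-Anosov maps on $\mathbb{T}^{3}$) a Hopf argument gives that $f$ is ergodic with respect to volume $m$. \emph{Stage 2 (the Franks--Manning semiconjugacy $h$, $h\circ f=A\circ h$, $h$ homotopic to the identity, is actually a homeomorphism):} recall that $h$ maps $\mathcal{F}^{\sigma}(f)$ onto $\mathcal{F}^{\sigma}(A)$ for $\sigma\in\{s,c,u\}$, is a homeomorphism on each strong stable and strong unstable leaf, and on each center leaf is a monotone surjection whose fibres are points or non-degenerate compact center arcs. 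I would then bootstrap regularity in \emph{Stage 3}.

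For Stage 2, let $\widetilde{\mathcal{C}}\subseteq\mathbb{T}^{3}$ be the union of the non-degenerate fibres of $h$. Since $f(h^{-1}(p))=h^{-1}(Ap)$, the set $\widetilde{\mathcal{C}}$ is $f$-invariant, and a short argument using the global product structure of the lifted invariant foliations (given $x\in\widetilde{\mathcal{C}}$ with a second point $x'\ne x$ on $\mathcal{F}^{c}(x)$ mapped to the same point, and $y\in\mathcal{F}^{s}(x)$, the point $y':=\mathcal{F}^{c}(y)\cap\mathcal{F}^{s}(x')$ satisfies $h(y')=h(y)$) shows $\widetilde{\mathcal{C}}$ is saturated by both strong stable and strong unstable leaves. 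I want $\widetilde{\mathcal{C}}=\emptyset$. By Stage 1, $m(\widetilde{\mathcal{C}})\in\{0,1\}$. If $m(\widetilde{\mathcal{C}})=1$, then for $m$-a.e.\ center leaf $L$ the pairwise disjoint collapsed sub-arcs exhaust $L$ up to conditional measure zero; by UBD each sub-arc of positive length carries conditional mass bounded below by a fixed constant times its length, so $h|_{L}$ pushes the conditional of $m$ on $L$ to a purely atomic measure on the target center leaf of $A$ — but $h_{*}m$ is $A$-invariant and, carrying forward the absolutely continuous, uniformly-bounded-density structure along $\mathcal{F}^{c}$ and the rigid (fibrewise homeomorphic) structure along $\mathcal{F}^{s}$, $\mathcal{F}^{u}$, one identifies $h_{*}m$ with the unique $A$-invariant measure having those conditionals, namely Lebesgue, whose center conditionals are arc length, hence non-atomic; contradiction. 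If $m(\widetilde{\mathcal{C}})=0$, then again by the lower density bound in UBD every collapsed arc has zero length, i.e.\ $\widetilde{\mathcal{C}}=\emptyset$. Hence $h$ is injective, so a homeomorphism conjugating $f$ to $A$. This is precisely where mere absolute continuity of $\mathcal{F}^{c}$ fails and the uniform bound is essential.

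For Stage 3, $h$ is now a topological conjugacy with $h_{*}m=\mathrm{Leb}$. Along each strong stable leaf $h$ conjugates a $C^{1+\alpha}$ contraction of the line to the linear contraction by the stable eigenvalue, and $h_{*}m=\mathrm{Leb}$ (via Livšic theory for the Jacobian cocycles along the invariant subbundles) forces the periodic data of $f$ to match that of $A$ in each direction, so a Livšic / non-stationary-linearization argument makes $h$ smooth along $\mathcal{F}^{s}$, and symmetrically along $\mathcal{F}^{u}$; along $\mathcal{F}^{c}$, $h$ carries arc length to arc length (the center conditionals of $m$ now being genuinely Lebesgue and collapse-free), giving smoothness along $\mathcal{F}^{c}$. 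A Journé-type regularity lemma applied to the pairs of transverse one-dimensional foliations then upgrades this to joint $C^{\infty}$ regularity of $h$, and a final bootstrap on $h\circ f=A\circ h$ gives $h\in C^{\infty}$ (as smooth as $f$). I expect the genuine obstacle to be Stage 2 — both the identification of $h_{*}m$ without yet knowing $h$ is injective, and making the ``collapsing $\Rightarrow$ atomic center conditionals'' dichotomy rigorous — while Stages 1 and 3 are adaptations of standard techniques.
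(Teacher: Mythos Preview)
Your Stage~2 hinges on a circular step. In the case $m(\widetilde{\mathcal C})=1$ you first observe (correctly) that collapsing center arcs of positive UBD-conditional mass to points makes the center conditionals of $h_*m$ purely atomic, and then try to contradict this by identifying $h_*m$ with Lebesgue ``carrying forward the absolutely continuous, uniformly-bounded-density structure along $\mathcal F^c$''. But that structure does not survive pushforward by a collapsing map: once center arcs are crushed, nothing forces $h_*m$ to have absolutely continuous center conditionals, and $A$ admits a huge family of invariant probabilities with atomic center disintegration, so no contradiction arises. Stage~3 inherits the same difficulty: even after $h$ is a homeomorphism, $h_*m$ is merely \emph{some} $A$-invariant measure, so you cannot read off the periodic data direction by direction from it; and the assertion that $h$ ``carries arc length to arc length'' along $\mathcal F^c$ is unjustified --- UBD bounds densities, it does not make them equal to $1$. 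Without the individual periodic data your Liv\v{s}ic/Journ\'e scheme cannot start.

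The paper bypasses injectivity of $h$ entirely and uses UBD \emph{quantitatively} rather than as a dichotomy device. On the universal cover, the uniform density bounds together with Hammerlindl's quasi-isometry of $\widetilde{\mathcal F}^c$ and the fact that $H$ sends center leaves to center leaves of $A$ yield that a center segment of initial length $\Omega$ has length in $[\beta^{-1}\lambda^n,\beta\lambda^n]$ after $n$ iterates. From this one manufactures, on \emph{every} center leaf (not just a.e.), measures scaling by exactly $\lambda$ under $F$ and with uniformly bounded density against leaf-Lebesgue; comparing masses on shrinking center intervals gives $\lim_n\tfrac{1}{n}\log\|Df^{n}|E^c(x)\|=\log\lambda$ at every $x\in\mathbb T^3$. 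This forces $f$ to be genuinely Anosov (so $h$ is automatically a conjugacy) and pins the center periodic datum; the strong-unstable datum then follows from Gogolev's criterion for absolutely continuous intermediate foliations of Anosov maps, the stable datum from $\det Df\equiv 1$, and $C^\infty$ from a bootstrap of the resulting $C^1$ conjugacy. The key idea you are missing is that UBD controls \emph{growth rates of center segments uniformly}, which is what produces the pointwise Lyapunov exponent; trying instead to rule out collapsing via $h_*m$ leads into the circularity above.
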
 


\textit{Structure of the paper:} The next section is the preliminaries where one finds the formal definitions and results mentioned in this section and elsewhere. After the preliminaries we prove our main result. Remark \ref{rem}, where we propose a general program to tackle rigidity problems for the center foliations of partially hyperbolic diffeomorphisms in a more general setting, comes after the proof of Theorem \ref{main:rigidity.C}.


\section{Preliminaries} \label{sec:preliminaries}

We briefly present the main definitions and results we will be using throughout this work.
 
\begin{defn}\label{defi:PH}
Let $M$ be a smooth compact Riemannian manifold. A diffeomorphism $f:M \rightarrow M$ is called partially hyperbolic if the tangent bundle of the ambient manifold  admits an invariant decomposition $TM = E^s \oplus E^c \oplus E^u$, such that for all unit vectors $v^{\sigma} \in E^{\sigma}, \sigma \in \{s, c, u\}$ and all $x, y, z \in M$
\[
 \|D_xf v^s \| < \|D_yf v^c\| < \| D_zf v^u\|
\] and
$\|D_xf v^s\| < 1 < \|D_zf v^u\|$.
\end{defn}

It is well known that for partially hyperbolic diffeomorphisms, there are foliations $\mathcal F^{s}$ and $\mathcal F^{u}$ tangent to the sub-bundles $E^{s}$ and $E^{u}$ called \textit{stable} and \textit{unstable foliation} respectively (for more details see for example \cite{YP}). On the other hand, a priori there is no garantee of the existence of a center foliation for a partially hyperbolic diffeomorphism. But that is not our concern since in the context we will be working (partially hyperbolic diffeomorphisms on $\mathbb T^3$)  all partially hyperbolic diffeomorphisms (as defined above) on $\mathbb T^3$ admit a central foliation tangent to $E^c$ by a result of Brin, Burago, Ivanov \cite{BBI}. We say that $f$ is an Anosov diffeomorphism if it satisfies Definition \ref{defi:PH} but there is no center direction. Anosov diffeomorphisms also have stable and unstable invariant foliations as described above.

\begin{defn} \label{defi:DA}
We say that $f\colon \mathbb{T}^n \rightarrow \mathbb{T}^n$ is a derived from Anosov diffeomorphism or just a DA diffeomorphism if it is a partially hyperbolic diffeomorphism homotopic to a linear Anosov automorphism $A:\mathbb T^n \rightarrow \mathbb T^n$.
\end{defn} 

If $f$ is a DA diffeomorphism we call the linear Anosov $A$ as the linearization of $f$, it also means that $f$ has a hyperbolic memory. More precisely, by results of J. Franks \cite{Franks} and A. Manning \cite{Manning} there is a semi-conjugacy $h: \mathbb{T}^3 \rightarrow \mathbb{T}^3$, which we will call  the Franks-Manning semi-conjugacy, between $f$ and its linearization $A$, that is,
\begin{equation} \label{semiconjugacy}
A \circ h = h \circ f.
\end{equation}
Moreover, this semi-conjugacy has the property that there exists a constant $\Omega  \in \mathbb{R}$ such that  if $H, F : \mathbb{R}^3 \rightarrow \mathbb{R}^3 $ denotes the lift of $h$ and $f$ to $\mathbb{R}^3$ respectively, we have $\|H(x) - x\| \leq \Omega$ for all $x \in \mathbb{R}^3$, and given two points $a,b \in \mathbb R^3$
\begin{eqnarray}\label{eq:h}
H(a) = H(b) \Leftrightarrow \| F^n(a) - F^n(b)\| < \Omega , \forall n\in \mathbb Z. 
\end{eqnarray}

Let $(M, \mu, \mathcal B)$ be a probability space, where $M$ is a compact metric space, $\mu$ a probability measure and $\mathcal B$ the Borelian $\sigma$-algebra.
Given a partition $\mathcal P$ of $M$ by measurable sets, we construct a probability space $(\mathcal P, \widetilde \mu, \widetilde{\mathcal B})$ in the following way. Let $\pi:M \rightarrow \mathcal P$ be the canonical projection, that is, $\pi$ associates to a point $x$ of $M$ the partition element of $\mathcal P$ that contains it. Then we define $\widetilde \mu := \pi_* \mu$ and $ \widetilde{\mathcal B}:= \pi_*\mathcal B$.

\begin{defn} \label{definition:conditionalmeasure}
 Given a partition $\mathcal P$. A family $\{\mu_P\}_{P \in \mathcal P} $ is a \textit{system of conditional measures} for $\mu$ (with respect to $\mathcal P$) if
\begin{itemize}
 \item[i)] given $\phi \in C^0(M)$, then $P \mapsto \int \phi d\mu_P$ is measurable;
\item[ii)] $\mu_P(P)=1$ $\widetilde \mu$-a.e.s;
\item[iii)] if $\phi \in C^0(M)$, then $\displaystyle{ \int_M \phi d\mu = \int_{\mathcal P}\left(\int_P \phi d\mu_P \right)daq }$.
\end{itemize}
\end{defn}

When it is clear which partition we are referring to, we say that the family $\{\mu_P\}$ \textit{disintegrates} the measure $\mu$.  

\begin{prop}\label{prop:unique.disintegration} \cite{EW,Ro52}
 Given a partition $\mathcal P$, if $\{\mu_P\}$ and $\{\nu_P\}$ are conditional measures that disintegrate $\mu$ on $\mathcal P$, then $\mu_P = \nu_P$ $\widetilde \mu$-a.e..
\end{prop}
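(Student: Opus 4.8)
\textbf{Proof proposal for Proposition \ref{prop:unique.disintegration}.}

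The statement to prove is the essential uniqueness of a system of conditional measures: if $\{\mu_P\}$ and $\{\nu_P\}$ both disintegrate $\mu$ over the partition $\mathcal P$, then $\mu_P = \nu_P$ for $\widetilde\mu$-almost every $P$. The plan is the standard measure-theoretic argument: reduce the equality of two families of (Borel, hence Radon) probability measures on the compact metric space $M$ to the agreement of their integrals against a fixed countable class of test functions, and then use property (iii) of Definition \ref{definition:conditionalmeasure} to pin down each integral $\widetilde\mu$-almost everywhere.

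First I would fix a countable dense subset $\{\phi_k\}_{k \in \mathbb N} \subset C^0(M)$ (this exists since $M$ is a compact metric space, so $C^0(M)$ is separable). For each fixed $k$, both $P \mapsto \int_P \phi_k \, d\mu_P$ and $P \mapsto \int_P \phi_k \, d\nu_P$ are measurable functions on $\mathcal P$ by property (i), and by property (iii) applied to each disintegration we have, for every measurable $\widetilde A \subseteq \mathcal P$ (using that $\phi_k \cdot \mathbf{1}_{\pi^{-1}(\widetilde A)}$ can be approximated / that (iii) localizes over $\mathcal P$),
\begin{equation*}
\int_{\widetilde A} \left( \int_P \phi_k \, d\mu_P \right) d\widetilde\mu(P) = \int_{\pi^{-1}(\widetilde A)} \phi_k \, d\mu = \int_{\widetilde A} \left( \int_P \phi_k \, d\nu_P \right) d\widetilde\mu(P).
\end{equation*}
Since $\widetilde A$ is arbitrary, the two integrands agree $\widetilde\mu$-a.e.; call the exceptional null set $N_k \subseteq \mathcal P$. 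Setting $N := \bigcup_{k} N_k$, which is still $\widetilde\mu$-null, for every $P \notin N$ we get $\int_P \phi_k \, d\mu_P = \int_P \phi_k \, d\nu_P$ for all $k$ simultaneously. By density of $\{\phi_k\}$ in $C^0(M)$ and the Riesz representation theorem, two Borel probability measures agreeing on a dense subset of $C^0(M)$ are equal; hence $\mu_P = \nu_P$ for all $P \notin N$, which is the claim. One mild technical point to address carefully is justifying the localized form of (iii) displayed above: property (iii) as stated only integrates over all of $\mathcal P$, so I would first note that $\mathbf{1}_{\pi^{-1}(\widetilde A)}$ is constant on each partition element, approximate $\phi_k \cdot \mathbf{1}_{\pi^{-1}(\widetilde A)}$ in the appropriate sense (or invoke a monotone class / standard extension argument passing from $C^0$ to bounded Borel functions and to functions of the form $g(\pi(x))\phi_k(x)$), and thereby obtain the displayed identity.

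The main obstacle is precisely this last bookkeeping step — extending property (iii) from continuous functions on $M$ to products $g \circ \pi \cdot \phi$ with $g$ bounded measurable on $\mathcal P$ — together with checking measurability of $P \mapsto \int_P \phi \, d\mu_P$ is preserved under the relevant limits; everything else is routine separability and Riesz-representation reasoning. Since this is a classical fact (see \cite{EW,Ro52}), I would keep the argument brief and refer to those sources for the full monotone-class details.
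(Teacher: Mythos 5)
Your argument is correct, and it is essentially the standard uniqueness proof from the sources the paper itself cites (the paper gives no proof of this proposition, deferring to \cite{EW,Ro52}). The one step you flag as delicate does go through exactly as you suggest: the class of bounded Borel functions $\psi$ for which $P \mapsto \int \psi\, d\mu_P$ is measurable and $\int \psi\, d\mu = \int\left(\int \psi\, d\mu_P\right) d\widetilde\mu$ holds contains $C^0(M)$ and is closed under bounded pointwise monotone limits, so the functional monotone class theorem extends (iii) to all bounded Borel functions; applying it to $\mathbf{1}_{\pi^{-1}(\widetilde A)}\phi_k$ (which is constant on each partition element, and $\mu_P(P)=1$ a.e.) yields your localized identity, after which the countable dense family plus Riesz representation finishes the proof as you wrote.
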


An easy consequence of this proposition, but which will be useful for us, is

\begin{cor} \label{cor:same.disintegration}
 If $T:M \rightarrow M$ preserves a probability $\mu$ and the partition $\mathcal P$, then  $T_*\mu_P = \mu_{T(P)}, \widetilde \mu$-a.e..
\end{cor}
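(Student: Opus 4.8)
The plan is to produce, out of the given disintegration $\{\mu_P\}$ and the map $T$, a \emph{second} system of conditional measures for $\mu$ with respect to $\mathcal P$, and then to invoke the uniqueness statement of Proposition \ref{prop:unique.disintegration}. Since $T$ is a bijection of $M$ that sends partition elements to partition elements, it induces a bijection $\widetilde T\colon \mathcal P \to \mathcal P$ characterized by $\pi \circ T = \widetilde T \circ \pi$, where $\pi\colon M\to\mathcal P$ is the canonical projection. The first thing I would record is that $\widetilde T$ is bi-measurable and preserves $\widetilde\mu$. Measurability holds because a set $\mathcal A \subseteq \mathcal P$ satisfies $\widetilde T^{-1}(\mathcal A)\in\widetilde{\mathcal B}$ exactly when $\pi^{-1}(\widetilde T^{-1}(\mathcal A)) = T^{-1}(\pi^{-1}(\mathcal A))$ is Borel, which is true since $T$ is a homeomorphism (and likewise for $\widetilde T^{-1}$); and $\widetilde T_*\widetilde\mu = \widetilde T_*\pi_*\mu = \pi_* T_*\mu = \pi_*\mu = \widetilde\mu$ because $T_*\mu = \mu$.

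Next I would set $\lambda_P := T_*\mu_{\widetilde T^{-1}(P)}$ for $P \in \mathcal P$ and check conditions (i)--(iii) of Definition \ref{definition:conditionalmeasure} for the family $\{\lambda_P\}$. Condition (ii) is immediate: $\lambda_P(P) = \mu_{\widetilde T^{-1}(P)}(T^{-1}(P)) = 1$ for $\widetilde\mu$-a.e. $P$, where one uses that $\widetilde T^{-1}$ is $\widetilde\mu$-preserving to transport the full-measure set on which $\mu_Q(Q)=1$. Condition (i) follows by writing $\int \phi\,d\lambda_P = \int (\phi\circ T)\,d\mu_{\widetilde T^{-1}(P)}$ and noting that this is the composition of the measurable map $P \mapsto \widetilde T^{-1}(P)$ with the measurable map $Q \mapsto \int (\phi\circ T)\,d\mu_Q$, the latter being legitimate since $\phi\circ T \in C^0(M)$. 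For condition (iii), I would change variables $P = \widetilde T(Q)$ using $\widetilde T_*\widetilde\mu = \widetilde\mu$; since $\mu_Q(Q)=1$ and $\lambda_P(P)=1$, this rewrites $\int_{\mathcal P}\bigl(\int_P \phi\,d\lambda_P\bigr)\,d\widetilde\mu$ as $\int_{\mathcal P}\bigl(\int_Q (\phi\circ T)\,d\mu_Q\bigr)\,d\widetilde\mu = \int_M (\phi\circ T)\,d\mu = \int_M \phi\,d\mu$, the middle equality being condition (iii) for $\{\mu_P\}$ and the last one being $T_*\mu=\mu$.

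Having verified (i)--(iii), Proposition \ref{prop:unique.disintegration} gives $\lambda_P = \mu_P$, i.e. $T_*\mu_{\widetilde T^{-1}(P)} = \mu_P$, for $\widetilde\mu$-a.e. $P$; applying the $\widetilde\mu$-preserving map $\widetilde T$ (which carries a full-measure set to a full-measure set) yields $T_*\mu_P = \mu_{\widetilde T(P)} = \mu_{T(P)}$ for $\widetilde\mu$-a.e. $P$, as claimed. I do not expect any analytic difficulty here; the only points requiring care are bookkeeping ones — that the induced map $\widetilde T$ on $\mathcal P$ is measurable and $\widetilde\mu$-invariant, and that each ``$\widetilde\mu$-a.e.'' assertion is transported correctly along $\widetilde T^{\pm1}$ — and the entire substance of the statement is the uniqueness of disintegrations.
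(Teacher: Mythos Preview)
Your proof is correct and follows exactly the paper's approach: you verify that the family $\{T_*\mu_{T^{-1}(P)}\}$ is a second disintegration of $\mu$ with respect to $\mathcal P$ and conclude by uniqueness (Proposition~\ref{prop:unique.disintegration}). The paper states this in one line (``$\{T_*\mu_P\}_{P\in\mathcal P}$ is also a disintegration of $\mu$''), and you have simply filled in the bookkeeping details.
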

\begin{proof}
 It follows from the fact that $\{T_*\mu_P\}_{P \in \mathcal P}$ is also a disintegration of $\mu$.\end{proof}

\begin{defn} \label{def:mensuravel}
We say that a partition $\mathcal P$ is a measurable partition (or countably generated) with respect to $\mu$ if there exist a measurable family $\{A_i\}_{i \in \mathbb N}$ and a measurable set $Y$ of full measure such that 
if $B \in \mathcal P$, then there exists a sequence $\{B_i\}$, where $B_i \in \{A_i, A_i^c \}$ such that $B \cap Y = \bigcap_i B_i \cap Y$.
\end{defn}

The next theorem guarantees the existence of conditional measures with respect to a measurable partition.

\begin{thm}[Rokhlin's disintegration \cite{Ro52}] \label{teo:rokhlin} 
 Let $\mathcal P$ be a measurable partition of a compact metric space $M$ and $\mu$ a probability. Then there exists a disintegration by conditional measures for $\mu$.
\end{thm}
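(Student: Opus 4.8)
The plan is to construct the conditional measures from conditional expectations via the Riesz representation theorem, using the separability of $C^0(M)$ and the martingale convergence theorem, and then to verify properties i)--iii) of Definition \ref{definition:conditionalmeasure}. Since $\mathcal P$ is measurable, fix a countable family $\{A_i\}_{i\in\mathbb N}$ and a full-measure set $Y$ as in Definition \ref{def:mensuravel}, and let $\mathcal A\subseteq\mathcal B$ be the (countably generated) $\sigma$-algebra generated by $\{A_i\}$; it is exactly the $\sigma$-algebra of unions of atoms of $\mathcal P$ modulo $Y$. Writing $\mathcal A_n:=\sigma(A_1,\dots,A_n)$, each $\mathcal A_n$ has finitely many atoms, so for $x$ in an atom $C$ of $\mathcal A_n$ with $\mu(C)>0$ I set $\mu^n_x:=\mu(\,\cdot\mid C)$ (and $\mu^n_x$ arbitrary on the $\mu$-null set of atoms of zero measure). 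Because $M$ is compact metric, $C^0(M)$ is separable; choose a countable $\mathbb Q$-linear subspace $\mathcal D\subseteq C^0(M)$, dense in the sup norm and containing the constant function $1$. For each $\phi\in\mathcal D$, the sequence $x\mapsto\int\phi\,d\mu^n_x=E_\mu[\phi\mid\mathcal A_n](x)$ is an $L^1$-bounded martingale for the filtration $\mathcal A_n\uparrow\mathcal A$, hence converges $\mu$-a.e.\ and in $L^1$ to $E_\mu[\phi\mid\mathcal A]$. Intersecting the countably many full-measure convergence sets, I obtain a set $M_0$ of full measure on which $L_x(\phi):=\lim_n\int\phi\,d\mu^n_x$ exists for every $\phi\in\mathcal D$.

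Next I would shrink $M_0$ by another countable collection of null sets so that, for every $x\in M_0$, the functional $L_x$ is $\mathbb Q$-linear, positive (if $\phi\ge 0$ then $L_x(\phi)\ge 0$), normalized ($L_x(1)=1$), and bounded by $\|\phi\|_\infty$ --- all of these being a.e.\ consequences of the corresponding properties of conditional expectation applied to the countably many elements (and pairs) of $\mathcal D$. Boundedness lets $L_x$ extend uniquely to a positive normalized linear functional on all of $C^0(M)$, so by the Riesz representation theorem there is a Borel probability measure $\mu_x$ on $M$ with $\int\phi\,d\mu_x=L_x(\phi)$. Since $x\mapsto L_x(\phi)$ agrees $\mu$-a.e.\ with an $\mathcal A$-measurable function, it is constant on each atom of $\mathcal P$ intersected with $Y$; thus $\mu_x$ depends (a.e.) only on $P=\pi(x)$, and I define $\mu_P:=\mu_x$. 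Property i) holds because $P\mapsto\int\phi\,d\mu_P$ is measurable for $\phi\in\mathcal D$, and a general $\phi\in C^0(M)$ is a uniform limit of elements of $\mathcal D$, so $P\mapsto\int\phi\,d\mu_P$ is a pointwise limit of measurable functions. Property iii) is immediate: $\int_M\phi\,d\mu=\int_M E_\mu[\phi\mid\mathcal A]\,d\mu=\int_M L_x(\phi)\,d\mu(x)=\int_{\mathcal P}\bigl(\int_P\phi\,d\mu_P\bigr)\,d\widetilde\mu$, the last step by $\widetilde\mu=\pi_*\mu$.

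The delicate step, and the one where measurability of $\mathcal P$ is genuinely used, is property ii), that $\mu_P(P)=1$ for $\widetilde\mu$-a.e.\ $P$. Here I would first upgrade property iii) from continuous to bounded Borel test functions by a monotone-class / dominated-convergence argument --- legitimate now that each $\mu_P$ is an honest Borel probability measure --- obtaining $\int_M\psi\,d\mu=\int_{\mathcal P}\bigl(\int_P\psi\,d\mu_P\bigr)\,d\widetilde\mu$ and measurability of $P\mapsto\mu_P(B)$ for every Borel $B$. Applying this with $\psi=\mathbf 1_{A_i}$, and using that $\mathbf 1_{A_i}$ is $\mathcal A$-measurable so $E_\mu[\mathbf 1_{A_i}\mid\mathcal A]=\mathbf 1_{A_i}$ $\mu$-a.e., I get $\mu_P(A_i)=\mathbf 1_{A_i}(x)\in\{0,1\}$ for a.e.\ $x\in P$; that is, for $\widetilde\mu$-a.e.\ $P$ and every $i$, $\mu_P(A_i)=1$ if $P\subseteq A_i$ and $\mu_P(A_i)=0$ if $P\subseteq A_i^{\,c}$. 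Likewise $\mu_P(Y)=1$ a.e.\ since $\mu(Y)=1$. Intersecting over the countably many indices $i$ and invoking the description of atoms of $\mathcal P$ as $\bigcap_i B_i\cap Y$ with $B_i\in\{A_i,A_i^{\,c}\}$, I conclude $\mu_P\bigl(\bigcap_i B_i\cap Y\bigr)=1$, i.e.\ $\mu_P(P)=1$, for $\widetilde\mu$-a.e.\ $P$. This gives a system of conditional measures in the sense of Definition \ref{definition:conditionalmeasure} and completes the proof; the main obstacle, as indicated, is precisely this last passage from the a.e.\ consistency of continuous test functions to the measurable structure of the atoms.
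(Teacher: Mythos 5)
The paper gives no proof of this statement: Theorem \ref{teo:rokhlin} is quoted as a classical result with only the citation to Rokhlin \cite{Ro52}, so there is no internal argument to compare against. Your proposal is a correct, self-contained proof along the standard modern route (finite filtration $\mathcal A_n\uparrow\mathcal A$ generated by the sets $A_i$ from Definition \ref{def:mensuravel}, martingale convergence on a countable dense family of test functions, Riesz representation to produce $\mu_P$, then verification of properties i)--iii)), which is essentially the textbook treatment of Rokhlin's construction; what it buys over the bare citation is that the only structure used is exactly what the paper assumes ($M$ compact metric, $\mathcal P$ countably generated mod $0$), and it makes transparent that property ii) is where measurability of the partition enters. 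One step you should phrase more carefully: to conclude $\mu_P(A_i)=\mathbf 1_{A_i}(x)$ for $\mu$-a.e.\ $x$, the integrated identity $\int_M\psi\,d\mu=\int_{\mathcal P}\bigl(\int\psi\,d\mu_P\bigr)d\widetilde\mu$ alone is not enough; you need that $x\mapsto\int\psi\,d\mu_{\pi(x)}$ is a \emph{version of} $E_\mu[\psi\mid\mathcal A]$ for every bounded Borel $\psi$ (equivalently, the identity localized over sets of $\mathcal A$). This is in fact what your monotone-class step proves if you run it on the conditional-expectation identity rather than on its integral over $M$: for $\phi\in\mathcal D$ the function $x\mapsto L_x(\phi)$ is a version of $E_\mu[\phi\mid\mathcal A]$ by construction, the identification passes to all of $C^0(M)$ by uniform density, and the class of bounded Borel $\psi$ for which it holds is closed under bounded monotone limits. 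With that rewording the argument is complete, and the remaining conventions (defining $\mu_P$ arbitrarily off the good set, reading $\mu_P(P)=1$ as $\mu_P(P\cap Y)=1$ with $P\cap Y$ measurable) are standard.
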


Recall that a foliation is absolutely continuous if the conditional measures on the leaves of the foliation are equivalent to the Lebesgue measure. In other words, a set of full volume intersects almost every leaf in a set of full leaf-volume. The Uniformly Bounded Density property is the uniform version of a foliation being absolutely continuous. 

\begin{defn}\label{defi:UBD}
 We say that a foliation $\mathcal F$ has the Uniform Bounded Density property (or UBD for short) if there exists a uniform constant $K$ such that for any foliated box $\mathcal B$
 $$K^{-1} \leq \frac{dm_x^\mathcal B}{d \widehat{\lambda_x}} \leq K $$ 
 where $m_x^{\mathcal B}$ is the conditional measure of volume on the segments of $\mathcal F$ on $\mathcal B$ and $\widehat{\lambda_x}$ means the normalized Lebesgue measure on the connected component of $\mathcal F(x) \cap \mathcal B$ which contains $x$.
\end{defn}

\section{The proof of Theorem \ref{main:rigidity.C}.}

\begin{proof}

We work on the universal cover of $\mathbb T^3$, $\mathbb R^3$. Let $\pi:\mathbb R^3 \rightarrow \mathbb T^3$ be the canonical projection. We denote by $F$ and $H$ the lifted function of $f$ and the semi-conjugacy $h$ respectively. And for the other lifts we use the symbol $\sim$, for instance $\tilc$ is the lift of the center foliation $\mathcal F^c$. We shall use $\tilc$ to refer to the center foliation of $F$ and $\tilc_x, \tilc(x)$ for the lifted center foliation through $x$. Suppose the splitting for $A$ is of the form $E^{ss} \oplus E^{wu} \oplus E^{uu}$, otherwise work with $F^{-1}$. Throughout the proof we shall denote by $\lambda$ the eigenvalue of the center direction of $A$, which is greater than one since we suppose $A$ to have an expanding center direction. We also see $A$ as a partially hyperbolic with the splitting of the tangent bundle as $E^s_A \oplus E^c_A \oplus E^u_A$.

At the universal cover let us define a ``base space" 
$$B : = \bigcup_{x \in \til{\mathcal F}^s(0)} \til{\mathcal F}^u(x).$$ 
By \cite{hammerlindl.thesis} all the center leaves of $F$ intersect $B$ at a unique point. We may also assume that $0$ is a fixed point for $F$, otherwise take a periodic point $p$ of period $n$ for $f$ and work with $f^n$ instead of $f$. The lifted function $F$ is such that $\pi(0)=p$. This implies that $B$ is $F$ invariant, $F(B)=B$. On $\mathbb R^3$ the center foliation is an oriented foliation. We also assume that $F$ preserves the center foliation orientation, otherwise work with $f^2$ instead of $f$ (and the respective lift). Note that these changes do not affect the result since $h$ is also a semiconjugacy between $f^n$ and $A^n$ for all $n \in \mathbb N$.
%

Let us define the following set
$$ \mathcal B_0 = \{y \in \mathbb R^3 \; | \; d^c(P(y),y) \leq \Omega \text{ and } y \in \til{ \mathcal F}^{c,+}\}$$
where $d^c(.,.)$ is the distance inside the center leaf, $\mathcal F^{c,+}$ stands for the positive half side of the orientable foliation $\til{\mathcal F}^c$ and $\Omega$ as defined in \S \ref{sec:preliminaries}.

We consider the following iterations $\mathcal B_k := F^k(\mathcal B_0)$ for all $k \in \mathbb N$. Now let $m_{x}^{\mathcal B_k}$ be the conditional measure defined on $\til{\mathcal F}^c_x \cap \mathcal B_k$ which is the disintegration of volume restricted to $\mathcal B_k$ and the partition given by $\{\til{\mathcal F}^c_{\xi} \cap \mathcal B_k \}_{\xi \in B}$. In particular $m_{x}^{\mathcal B_k}$ is a probability measure. The existence of $m_x^{\mathcal B_k}$ is guaranteed by Theorem \ref{teo:rokhlin}, but there are two remarks concerning the application of Rokhlin's theorem, the first one is that Theorem \ref{teo:rokhlin} is for a probability measure, the second is that $\til{\mathcal F}^c_x \cap \mathcal B_k$ should be checked to be a measurable partition. Let us see how to deal with these two issues. Note that it is not difficult to see that $B$ is a metrizable space with the topology of subspace of $\mathbb R^3$. Hence let $Z \subset B$ be a ball in $B$ and consider $Z_k:=(P|\mathcal B_k)^{-1}(Z)$, let $S_Z$ be a countable dense set in $Z$ and for all discs $D(s,r)$ inside $B$ of center $s \in S_Z$ and radius $r \in \mathbb Q$ let $A_{s,r}:=  (P|\mathcal B_k)^{-1}(D(s,r))$, then these $A_{s,r}$ (there are a countable number of them) play the rule of $A_i$ in Definition \ref{def:mensuravel} (and on Definition \ref{def:mensuravel} one may assume $Y$ to be $ \mathbb R^3$). Hence we may apply Theorem \ref{teo:rokhlin} to disintegrate $Vol|Z_k$ when normalized. Then the conditional measures $m_x^{\mathcal B_k}$ comes from this disintegration. But the normalization does not affects the disintegration as since the only change one gets is for the projected measure (which in the notation of Definition \ref{definition:conditionalmeasure} would be $\widetilde \mu$).



Let us denote $\mu_{\mathcal B_k}:=P_*Vol|\mathcal B_k$. That is, $\mu_{\mathcal B_k}$ is the projection on $B$ of the volume measure restricted to $\mathcal B_k$. We are able to relate two different conditional measures, by \cite[Lemma 3.2]{AVWI} we have that
\begin{eqnarray}\label{eq:two.conditionals}
m_x^{\mathcal B_k} \frac{d \mu _{\mathcal B_k}}{d \mu_{\mathcal B_0}}(x)=
m_x^{\mathcal B_0}. 
\end{eqnarray}

Note that these measures differ up to multiplication by the constant $\frac{d \mu _{\mathcal B_k}}{d \mu_{\mathcal B_0}}(x)$. Observe as well that
 $$ m_x^{\mathcal B_k}(.) = \rho_k(x,.) \widehat \lambda_{\mathcal B_k}(x,.), \; \forall k \in \mathbb N,$$
where $\widehat{\lambda}_{\mathcal B_k}(x,.)$ is the normalized length measure on the leaf $\tilc_x \cap \mathcal B_k$ and $\rho_k(x,.):\tilc_x \cap \mathcal B_k \rightarrow \mathbb R$ is the density function. By the UBD property of the center foliation $\rho_k \in [K^{-1},K]$. Then
\begin{eqnarray*}
m_x^{\mathcal B_k}(I) & = & \int_I \rho_k(x,\zeta) d\widehat \lambda_{\mathcal B_k}(x,.)(\zeta) = \int_I \rho_k(x,\zeta) d\frac{ \lambda_{\mathcal B_k}(x,.)}{\lambda_x(\tilc_x \cap \mathcal B_k)}(\zeta)\\ & \leq & \frac{K}{\lambda_x(\tilc_x \cap \mathcal B_k)} \int_I d  \lambda_{\mathcal B_k}(x,.)(\zeta) \leq \frac{K \lambda_{\mathcal B_k}(x,I)}{\lambda_x(\tilc_x \cap \mathcal B_k)} 
\end{eqnarray*}
for all $k \in \mathbb N$ and $I \subset \tilc_x$ measurable and where $\lambda_x$ is the Lebesgue (length) measure on the leaf $\tilc$. Analogously
\begin{eqnarray*}
m_x^{\mathcal B_k}(I)\geq \frac{K^{-1} \lambda_{\mathcal B_k}(x,I)}{\lambda_x(\tilc_x \cap \mathcal B_k)}. 
\end{eqnarray*}

From the above inequalities we have

$$ \frac{d \mu _{\mathcal B_k}}{d \mu_{\mathcal B_0}}(x)= \frac{m_x^{\mathcal B_0}(.)}{m_x^{\mathcal B_k}(.)} \leq K^2 \frac{\lambda_x(\mathcal F^c_x \cap \mathcal B_k)}{\lambda_x(\mathcal F^c_x \cap \mathcal B_0)},$$
similarly we get
$$ \frac{d \mu _{\mathcal B_k}}{d \mu_{\mathcal B_0}}(x) \geq K^{-2} \frac{\lambda_x(\mathcal F^c_x \cap \mathcal B_k)}{\lambda_x(\mathcal F^c_x \cap \mathcal B_0)}.$$



So far we have taken for granted that if the conditional measure exists for $\mathcal B_0$ it also exists for $\mathcal B_k$, this is shown in the next lemma.

\begin{lem}
 There is a set $A \subset \mathbb R^3$ which is $F$-invariant, has full volume measure and is $\tilc$ foliated for which $m_{x}^{\mathcal B_k}$ is well defined for all $x \in A$ and $k \in \mathbb N$.
\end{lem}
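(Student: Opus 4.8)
The plan is to realize $A$ as the saturation (union of entire center leaves) of an $F$-invariant, full-volume set of leaves, obtained by disintegrating $Vol$ at level $0$ and transporting it by the dynamics. First I would, exactly as in the paragraph preceding the lemma, cover the transversal $B$ by countably many balls $Z$, put $Z_0:=(P|_{\mathcal B_0})^{-1}(Z)$, and apply Rokhlin's theorem (Theorem \ref{teo:rokhlin}) to the normalization of $Vol|_{Z_0}$ with respect to the measurable partition $\{\tilc_\xi\cap\mathcal B_0\}_{\xi\in Z}$. Gluing the pieces along overlapping balls by uniqueness (Proposition \ref{prop:unique.disintegration}) produces a set $G_0\subset B$ of full $\mu_{\mathcal B_0}$-measure such that $m_\xi^{\mathcal B_0}$ is well defined for every $\xi\in G_0$.

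Next I would upgrade ``full $\mu_{\mathcal B_0}$-measure'' to ``full volume''. Since $f$ is conservative and $F^k(\mathcal B_0)=\mathcal B_k$ one has $Vol|_{\mathcal B_k}=F^k_*(Vol|_{\mathcal B_0})$, and since $F$ preserves $\tilc$ and $F(B)=B$ (so that $P\circ F=F\circ P$ on $\mathbb R^3$) the map $F^k$ carries the partition $\{\tilc_\xi\cap\mathcal B_0\}$ onto $\{\tilc_\xi\cap\mathcal B_k\}$. Hence $\{F^k_* m^{\mathcal B_0}_{F^{-k}\xi}\}_\xi$ is a disintegration of $Vol|_{\mathcal B_k}$, so one may take $m^{\mathcal B_k}_\xi:=F^k_* m^{\mathcal B_0}_{F^{-k}\xi}$; this is well defined for every $\xi\in F^k(G_0)$ and, by uniqueness, agrees with any disintegration produced at level $k$ (this is the mechanism of Corollary \ref{cor:same.disintegration}). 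It remains to check that $F^k(G_0)$ determines a full-volume saturated set, and this is the only place the hypothesis is used: the UBD property (Definition \ref{defi:UBD}) gives absolute continuity of $\mathcal F^c$, so the conditional measures of $Vol$ along center leaves are equivalent to leaf-Lebesgue; as $\tilc_\xi\cap\mathcal B_0$ is a leaf-segment of length $\Omega>0$ for every $\xi\in B$, the factor measure $\mu_{\mathcal B_0}=P_*(Vol|_{\mathcal B_0})$ is equivalent to the full transverse measure $P_*Vol$, and therefore $Vol(\mathbb R^3\setminus P^{-1}(G_0))=P_*Vol(B\setminus G_0)=0$. Since $F$ preserves volume and commutes with $P$, $P^{-1}(F^k(G_0))=F^k(P^{-1}(G_0))$ has full volume for every $k\in\mathbb Z$.

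Finally I would set $A:=\bigcap_{k\in\mathbb Z}F^k(P^{-1}(G_0))=P^{-1}(\bigcap_{k\in\mathbb Z}F^k(G_0))$. Being a countable intersection of full-volume sets, $A$ has full volume; it is a union of center leaves; it satisfies $F(A)=A$ by construction; and for every $x\in A$ and every $k\in\mathbb N$ one has $P(x)\in F^k(G_0)$, so $m^{\mathcal B_k}_x$ is well defined by the previous paragraph. This gives the lemma.

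I expect the genuine difficulty to be the passage from ``$\mu_{\mathcal B_0}$-almost every leaf'' to ``$Vol$-almost every leaf''. Without the UBD property the disintegration is controlled only on the slab $\mathcal B_0$, while the remaining portion of a center leaf could carry positive volume concentrated precisely on the exceptional leaves — exactly the atomic-disintegration behaviour recalled in the introduction — so some form of absolute continuity of $\mathcal F^c$ is really needed here, not merely convenient. A secondary, purely technical point is the assembly of the local ball-by-ball Rokhlin disintegrations into a single disintegration over the non-compact transversal $B$, which is handled by the uniqueness statement.
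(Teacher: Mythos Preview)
Your argument is correct and complete, but it follows a genuinely different route from the paper's.

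The paper descends to $\mathbb T^3$: it fixes a countable dense set $S'\subset\pi(B)$ and, for every $s\in S'$ and rational $r$, a foliated box $C'(s,r)$ whose center plaques all have length $r$, with $Vol(\partial C'(s,r))=0$ and $\bigcup_s C'(s,r)$ of full volume. Rokhlin disintegration on each $C'(s,r)$ gives a full $Vol|_{C'(s,r)}$-measure set where the conditionals exist; intersecting the corresponding full-volume sets over the countable family $(s,r)$ and removing the boundaries produces a full-volume set $A_1$ on which conditional measures exist along center segments of \emph{every} rational length. One then sets $A_2:=\bigcap_{i\in\mathbb Z}f^i(A_1)$ and $A:=\pi^{-1}(A_2)$. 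The point is that full volume comes for free from a countable intersection of full-volume sets; no regularity of $\mathcal F^c$ is used.

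Your approach instead disintegrates once, on $\mathcal B_0$, and transports the conditionals to each $\mathcal B_k$ via $F$ and Corollary~\ref{cor:same.disintegration}. This is economical and directly produces the measures the subsequent lemmas need. The cost is the step from ``$\mu_{\mathcal B_0}$-a.e.\ leaf'' to ``$Vol$-a.e.\ leaf'', for which you invoke the UBD property. That is legitimate here since UBD is the standing hypothesis, though your use of $P_*Vol$ is formally awkward (it is not $\sigma$-finite on $B$); the clean statement is that absolute continuity forces a $\tilc$-saturated set that is null in $\mathcal B_0$ to be null in every foliated box, hence globally. One correction to your closing self-assessment: your claim that ``some form of absolute continuity of $\mathcal F^c$ is really needed here, not merely convenient'' is refuted by the paper's proof, which establishes the lemma for any conservative partially hyperbolic $f$ with a center foliation, without any absolute-continuity input.
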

\begin{proof}
Let $S \subset B$ be a countable dense set of $B$, let $S':=\pi(S)$ and $B':=\pi(B)$ (recall that $\pi:\mathbb R^3 \rightarrow \mathbb T^3$ is the canonical projection). Then $S'$ is a dense set of $B'$. For a given $s \in S'$ and $r \in \mathbb Q$ define $C'(s,r)$ to be a foliated box such that all segments of the center foliation have length $r$ and $Vol(\partial C'(s,r))=0$ and satisfying also that $Vol(\bigcup_{s \in S'} C'(s,r))=1$.

We disintegrate volume on the foliated box $C'(s,r)$, hence its conditional measures are defined on a set $M(s,r)$ which has $Vol|C'(s,r)$ full measure. Then $Vol( M(s,r) \cup C'(s,r)^c )=1$. Now define
$$A_0 := \bigcap_{r \in \mathbb Q} \left(\bigcup_{s \in S'} C'(s,r)\right) \text{ and } A_1:= A_0 \setminus \bigcup_{(s,r)\in S'\times \mathbb Q } \partial \left( C'(s,r)   \right). $$

Notice that $A_0$ and $A_1$ have full volume and that if $x \in A_1$ then there are conditional measures defined on the center leaf of $x$ with arbitrary size. Let $A_2:= \bigcap_{i \in \mathbb Z} f^{i}(A_2)$, which is an $f$ invariant full volume set. Then the lemma is proved by taking $A:=\pi^{-1}(A_2)$. 
 \end{proof}

Let $\eta_{x,k}$ be a measure (not a probability) defined on $\tilc_x\cap \mathcal B_k$ as 
$$ \eta_{x,k}:= \lambda^k m_x^{\mathcal B_k}.$$
Recall that $\lambda$ is the eigenvalue of the center direction of $A$. Now, using Equation (\ref{eq:two.conditionals}), on $\mathcal B_0$ we have

$$\eta_{x,k}= \lambda^k m_x^{\mathcal B_k} = (\frac{d \mu_{\mathcal B_k}}{d \mu_{\mathcal B_0 }}(x))^{-1} \lambda^k m_x^{\mathcal B_0 }$$ 
and from the inequalities above
$$\frac{d \mu_{B_k}}{d \mu_{\mathcal B_0}}(x) = \alpha_{x,k} \frac{\lambda_x(\mathcal F^c_x \cap \mathcal B_k)}{{\lambda_x(\mathcal F^c_x \cap \mathcal B_0)}},$$ where $\alpha_{x,k} \in [K^{-2}, K^2]$, for all $x \in A \subset \mathbb R^3$ and $k \in \mathbb N$.

Combining the above we may rewrite
$$\eta_{x,k} =  \left(\alpha_{x,k} \frac{\lambda_x(\mathcal F^c_x \cap \mathcal B_k)}{{\lambda_x(\mathcal F^c_x \cap \mathcal B_0)}}\right)^{-1} \lambda^k m_x^{\mathcal B_0}.$$

\begin{lem}\label{lemma:growth.bound}
 There is $\beta >0$ such that $\lambda^k / \lambda_x(\til{\mathcal F}^c_x \cap \mathcal B_k) \in [1/\beta, \beta]$ for all $x \in \mathbb R^3$.
\end{lem}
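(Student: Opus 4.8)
The plan is to show that the length of the arc $\tilc_x \cap \mathcal B_k$ grows exactly like $\lambda^k$, up to a multiplicative constant that is uniform in $x$. First I would use that $\mathcal B_k = F^k(\mathcal B_0)$ together with the semi-conjugacy relation $A \circ H = H \circ F$ and the defining inequality \eqref{eq:h}: since every point of $\mathcal B_0$ lies on a center leaf at center-distance at most $\Omega$ from the point $P(y) \in B$ (and, as $H$ moves points by at most $\Omega$ and $H$ is constant on center leaves by the uniqueness-of-intersection property from \cite{hammerlindl.thesis}), the arc $\tilc_x \cap \mathcal B_0$ is comparable in length to a fixed-size arc. Concretely, $\mathcal B_0$ restricted to a center leaf is, by construction, the sub-arc between the base point and the point at center-distance determined by the condition $d^c(P(y),y)\le \Omega$, so $\lambda_x(\tilc_x \cap \mathcal B_0)$ is bounded above and below by universal constants depending only on $\Omega$ and a Lipschitz bound for $H$ along center leaves.

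Next I would push forward by $F^k$. Because $F$ preserves the center foliation and its orientation, $F^k$ maps the arc $\tilc_x \cap \mathcal B_0$ onto the arc $\tilc_{F^k(x)} \cap \mathcal B_k$, so it suffices to control how $F^k$ distorts center-arc length. This is where I would again invoke \eqref{eq:h}: applying $H$ and using $A\circ H = H\circ F$, the image arc $H(\tilc_{F^k(x)}\cap \mathcal B_k) = A^k(H(\tilc_x\cap\mathcal B_0))$ is an honest linear image, and $A^k$ expands the center direction of $A$ exactly by $\lambda^k$. Since $H$ has bounded distance to the identity (the $\Omega$ bound), the center arc $\tilc_{F^k(x)}\cap\mathcal B_k$ and its $H$-image $A^k(H(\tilc_x\cap\mathcal B_0))$ differ in diameter by at most $2\Omega$; moreover $H$ restricted to a center leaf is a homeomorphism onto the corresponding center leaf of $A$ (which is a straight line in the $E^c_A$ direction), and one needs that this restriction is bi-Lipschitz with uniform constants — this follows because $H$ is uniformly continuous (being $\Omega$-close to the identity and a lift of a continuous map on a compact manifold) and the center leaves of $A$ are uniformly transverse to $E^s_A \oplus E^u_A$. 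Combining, $\lambda_x(\tilc_{F^k(x)}\cap\mathcal B_k)$ is within a uniform constant factor of $\lambda^k \cdot (\text{length of a fixed arc})$, which gives the claimed two-sided bound with a single $\beta$ independent of $x$ and $k$.

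The main obstacle I expect is the bi-Lipschitz control of $H$ along center leaves: a priori $H$ is only a semi-conjugacy and need not be injective globally, so one must use precisely that on center leaves $H$ is a bijection onto straight center leaves of $A$ (again from \cite{hammerlindl.thesis} / the Franks–Manning structure) and that the $\Omega$-bound forces the arc-length comparison. A secondary subtlety is the endpoints: $\mathcal B_0$ is a \emph{half-leaf} truncated at center-distance $\Omega$ from the base, so one should check that the truncation point varies in a controlled way and that taking $F^k$ does not create arcs whose length oscillates — but this is handled by the same $A^k$-linearity argument, since the $H$-image of the truncation locus sits at bounded distance from a fixed affine hyperplane, whose $A^k$-image is again a hyperplane at center-distance growing like $\lambda^k$. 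Once these comparisons are set up, the inequality $\lambda^k/\lambda_x(\tilc_x\cap\mathcal B_k)\in[1/\beta,\beta]$ is immediate.
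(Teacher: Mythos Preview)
Your overall strategy coincides with the paper's: compare the $F$-center arc to its $A$-image via the semi-conjugacy $H$, use that $\|H-\mathrm{id}\|\le\Omega$, and exploit that $A^k$ expands the $A$-center direction exactly by $\lambda^k$. The paper also uses, from \cite{ures}, that $H$ sends center leaves of $F$ onto center leaves of $A$.

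The genuine gap is in how you pass from Euclidean distance between endpoints to \emph{intrinsic center arc length} $\lambda_x(\tilc_x\cap\mathcal B_k)$. You propose to do this via ``$H$ restricted to a center leaf is bi-Lipschitz with uniform constants'', and you justify that by uniform continuity of $H$ plus transversality of $E^c_A$. That justification is incorrect: uniform continuity gives no Lipschitz bound, and at this point of the argument $H$ is only a semi-conjugacy, so it need not even be injective on center leaves (it can collapse center sub-arcs for a genuine DA), let alone bi-Lipschitz. The paper avoids this entirely by invoking the \emph{quasi-isometry} of the center foliation from \cite{hammerlindl.thesis}: there is $Q>0$ with $Q^{-1}\|z-w\|\le d^c(z,w)\le Q\|z-w\|$ for $z,w$ on the same center leaf. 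With this in hand one only needs to control $\|F^k(a)-F^k(b)\|$ for the two endpoints $a,b$ of $\tilc_x\cap\mathcal B_0$, and the triangle inequality
\[
\|F^k(a)-F^k(b)\| \;\le\; 2\Omega + \|A^k(H(a))-A^k(H(b))\|
\]
(together with the reverse inequality) finishes the job, since $H(a),H(b)$ lie on the same straight $A$-center leaf and $\|A^k(H(a))-A^k(H(b))\|=\lambda^k\|H(a)-H(b)\|$. This is the missing ingredient in your plan.

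Two minor points: your parenthetical ``$H$ is constant on center leaves'' is a slip (presumably you meant $P$, the center projection to $B$, is constant on center leaves); and your endpoint/hyperplane discussion is unnecessary once quasi-isometry is in place, since $\tilc_x\cap\mathcal B_0$ has center-length exactly $\Omega$ by construction and $F^k$ carries its endpoints to those of $\tilc_{F^k(x)}\cap\mathcal B_k$.
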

\begin{proof}
Let $x, y \in \mathbb R^3$ be the extreme points of the segment $\tilc_x \cap \mathcal B_0$, then $F^n(x)$ and $F^n(y)$ are the extreme end points of $\tilc(F^n(x)) \cap \mathcal B_k$. Hence we want to measure the growth of these extreme points inside the center foliation. By \cite{hammerlindl.thesis} the center foliation $\tilc$ is quasi-isometric. That means that there exists a constant $Q$ such that for all $z \in \tilc(w)$, then $Q^{-1}||z-w|| \leq d^c(z,w) \leq Q ||z-w||$ where $d^c$ is the distance inside the center foliation. Hence due the quasi-isometry of the center foliation we only need to analyze the growth of $||F^n(x)-F^n(y)||$. We will majorate, to minorate is an analogous argument.
\begin{eqnarray*}
  ||F^n(x)-F^n(y)|| \leq & & ||F^n(x) - H\circ F^n(x)||\\ & + & ||H\circ F^n(x) - H\circ F^n(y)|| + ||H\circ F^n(y) - F^n(y)||.
\end{eqnarray*}
Because $H$ is uniformly close to the identity, as exposed in \S \ref{sec:preliminaries}, the first and third terms of the right hand side are uniformly bounded. We have to see that the term  $||H\circ F^n(x) - H\circ F^n(y)|| = ||A^n(H(x)) - A^n(H(y))||$ grows at rate $\lambda$. On the last identity we used $H\circ F = A \circ H$.

By \cite{ures} the semi-conjugacy sends center leaf into center leaf (i.e. $H(\til{\mathcal F}^c_F)=\til{\mathcal F}^c_A$), therefore $||A^n(H(x)) - A^n(H(y))||$ does grows at rate $\lambda$ since $H(x) \in \tilc_A(H(y))$. 
 \end{proof}

\begin{lem}\label{lemma:eta}
 For all $x \in A$ there exists a measure $\eta_x$  such that $F_*\eta_x=\lambda^{-1}\eta_{F(x)}$ and $\eta_x = \rho_x \lambda_x$ where the density $\rho_x(.)$ is uniformly bounded (independent of $x \in A$).
\end{lem}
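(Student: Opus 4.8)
## Proof Proposal for Lemma~\ref{lemma:eta}

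The plan is to obtain $\eta_x$ as a limit (along a suitable subsequence) of the measures $\eta_{x,k}$ constructed above, and to extract the equivariance and the uniform density bound from the estimates already established. First I would record that, by Lemma~\ref{lemma:growth.bound} and the formula
$$\eta_{x,k} = \left(\alpha_{x,k}\,\frac{\lambda_x(\mathcal F^c_x \cap \mathcal B_k)}{\lambda_x(\mathcal F^c_x \cap \mathcal B_0)}\right)^{-1}\lambda^k m_x^{\mathcal B_0},$$
the total mass $\eta_{x,k}(\mathbb R^3)=\lambda^k/\big(\alpha_{x,k}\lambda_x(\mathcal F^c_x\cap\mathcal B_k)\big)\cdot\lambda_x(\mathcal F^c_x\cap\mathcal B_0)$ is bounded above and below by constants depending only on $K$, $\beta$ and $\lambda_x(\mathcal F^c_x\cap\mathcal B_0)$, which is itself uniformly bounded since $\mathcal B_0$ has center segments of length at most $\Omega$. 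More importantly, since $m_x^{\mathcal B_0} = \rho_0(x,\cdot)\widehat\lambda_{\mathcal B_0}(x,\cdot)$ with $\rho_0\in[K^{-1},K]$, the measure $\eta_{x,k}$ on any fixed compact piece of $\tilc_x$ has a density with respect to $\lambda_x$ that lies in a uniform interval $[c^{-1},c]$ (with $c$ depending only on $K$, $\beta$, $\Omega$). Hence on each compact exhaustion piece of $\tilc_x$ the densities $\eta_{x,k}/\lambda_x$ form a bounded sequence in $L^\infty(\lambda_x)$, which is weak-$*$ sequentially compact; by a diagonal argument over an exhaustion of $\tilc_x$ I extract a subsequence $k_j\to\infty$ along which $\eta_{x,k_j}\to\eta_x$, where $\eta_x=\rho_x\lambda_x$ and $\rho_x\in[c^{-1},c]$ — giving the uniform density bound.

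Next I would establish the equivariance $F_*\eta_x = \lambda^{-1}\eta_{F(x)}$. The key point is that $F_* m_x^{\mathcal B_k} = m_{F(x)}^{\mathcal B_{k+1}}$: this is exactly Corollary~\ref{cor:same.disintegration} applied with $T=F$, which preserves volume and maps the measurable partition $\{\tilc_\xi\cap\mathcal B_k\}$ onto $\{\tilc_\xi\cap\mathcal B_{k+1}\}$ (using $F(\mathcal B_k)=\mathcal B_{k+1}$ and $F(B)=B$). Therefore
$$F_*\eta_{x,k} = \lambda^k F_* m_x^{\mathcal B_k} = \lambda^k m_{F(x)}^{\mathcal B_{k+1}} = \lambda^{-1}\,\eta_{F(x),k+1}.$$
Taking the limit along the subsequence $k_j$ — and noting that $F_*$ is continuous for weak-$*$ convergence of the relevant locally finite measures, and that $\eta_{F(x),k_j+1}$ converges to $\eta_{F(x)}$ (passing, if necessary, to a further subsequence so that the limits defining $\eta_x$ and $\eta_{F(x)}$ are taken compatibly) — yields $F_*\eta_x = \lambda^{-1}\eta_{F(x)}$.

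The main obstacle is the bookkeeping around the subsequence: a priori the limit $\eta_x$ depends on the chosen subsequence $k_j$, and the equivariance relates $\eta_x$ to $\eta_{F(x)}$ which is defined via a possibly different subsequence. I would handle this by fixing a single subsequence once and for all along which $\eta_{x,k_j}$ converges for $x$ in a countable dense subset of each center leaf in $A$, then using the uniform density bounds to promote convergence to all of $A$; alternatively, since the relation $F_*\eta_{x,k}=\lambda^{-1}\eta_{F(x),k+1}$ holds for every $k$, one may simply define $\eta_x$ for the whole forward and backward $F$-orbit of each leaf coherently by extracting one subsequence for a chosen representative leaf and transporting by the (volume- and partition-preserving) dynamics. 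A secondary technical point is verifying that the limit measure $\eta_x$ is genuinely supported on all of $\tilc_x$ (not just part of it) — this follows because the center segments $\tilc_x\cap\mathcal B_k$ exhaust $\tilc_x$ as $k\to\infty$, which in turn follows from quasi-isometry of $\tilc$ (Lemma~\ref{lemma:growth.bound}) together with $\lambda>1$, so the half-leaf $\tilc^{c,+}$ is covered, and the symmetric construction (or invariance) covers the other half.
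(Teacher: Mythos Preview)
Your proposal is correct and follows essentially the same approach as the paper: establish $F_*\eta_{x,k}=\lambda^{-1}\eta_{F(x),k+1}$ via uniqueness of disintegration, extract a convergent subsequence using the uniform bounds (the paper does this by taking subsequential limits of the scalars $\alpha_{x,k}\in[K^{-2},K^2]$, which is equivalent to your weak-$*$ compactness argument), and handle orbit-coherence by choosing one subsequence per orbit representative via a diagonal argument---your second alternative is exactly what the paper does.
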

\begin{proof}
Since the disintegration is unique and $F$ is volume preserving, that means that the family $\{F_*\eta_{x,k}\}_{x \in B}$ when normalized is a disintegration of volume restricted to $\mathcal B_{k+1}$ and the family $\{\eta_{x,k+1} \}_{x \in B}$ when normalized is a disintegration of volume restricted to $\mathcal B_{k+1}$, hence these normalized measures are the same by Proposition \ref{prop:unique.disintegration}. As a consequence $F_*\eta_{x,k}$ and $\eta_{x,k+1}$ are the same up to multiplication by a constant. To find out this constant one could simply evaluate these measures on the same set. Using the $F$ invariance of $\tilc$ and the definition of $\eta_{x,k}$ we obtain
\begin{eqnarray*}
F_*\eta_{x,k}(F(\mathcal B_k \cap \tilc_x)) & = &  \eta_{x,k}(F^{-1}( F(\mathcal B_k \cap \tilc_x) )) = \eta_{x,k}(\mathcal B_k \cap \tilc_x) \\ 
& = & \lambda^k = \lambda^{-1} \lambda^{k+1} = \lambda^{-1} \eta_{F(x),k+1}(\mathcal B_{k+1} \cap \tilc_{F(x)})\\
& = &\lambda^{-1} \eta_{F(x),k+1}(F(\mathcal B_k \cap \tilc_x)),
\end{eqnarray*}
therefore $F_* \eta_{x,k}= \lambda^{-1} \eta_{F(x),k+1}$.

Hence if there is a subsequence $k_{i(x)}$ such that $\eta_{x,k_{i(x)}}$ converges weakly to $\eta_x$ and $\eta_{F(x),k_{i(x)}+1}$ converges weakly to $\eta_{F(x)}$, then $F_*\eta_x = \lambda^{-1} \eta_{F(x)}$.
 
 We now show how to define these subsequences. By the Axiom of Choice we can choose a set $W \subset A$ such that if $x, y \in C$ and $x \neq y$ then $\{F^{n}(x)\}_{n \in \mathbb Z} \cap \{F^{n}(y)\}_{n \in \mathbb Z} = \emptyset$. For $x\in W$ let $\alpha_{x,k_{i(x)}}$ be a subsequence such that $\{\alpha_{x,k_{i(x)}}\}_{i(x)}$ is a convergent subsequence. Now consider the subsequence $k_{i(x)} +1$ and take $i(F(x))$ a subsequence of $i(x)$ such that $\{ \alpha_{F(x),k_{i(F(x))+1} } \}_{i(F(x))}$ is a convergent subsequence. Repeat the argument for the positive iterates $F^n(x)$. By cantor diagonal process there exists a sequence $j(x)$ such that $\alpha_{F^n(x),(k_{j(x)} +n)_{j(x)}}$ is a convergent subsequence for all $n \in \mathbb N$. Repeat the argument for the backward iteraction and one obtain the desired subsequence in the orbit of $x$.
 \end{proof}

Some comments on the arbitrariness of the set $W$ used in the lemma above may be seen on Remark \ref{rem}, after the proof of our main theorem.

One of our goals is to calculate the center Lyapunov exponent for all points of $\mathbb R^3$. For that we will need the dynamically defined measures $\eta_x$, but they are defined only almost everywhere.  We wish we could define these measures on all center leaves. A priori it is not possible, what we shall do is to construct some sort of fake dynamically defined conditional measures, but they are good enough for us to compute the Lyapunov exponents for all point.

Given $\xi \in \mathbb R^3$ take any sequence $\{\xi_n\} \subset \mathbb R^3$ for which it is defined $\eta_{\xi_n}$ for all $n$ and $\displaystyle \lim_{n \rightarrow \infty} \xi_n = \xi$. Let $\mathcal I_{\xi}$ be the set of all connected intervals on $\mathcal F^c_{\xi}$. Given $I \in \mathcal I_{\xi}$ define
\begin{eqnarray*}
m_\xi (I) &:=& \liminf_{n \rightarrow \infty} \frac{1}{n} \sum_{i=1}^n \eta_{\xi_i}(I_{\xi_i}), \quad \forall I \in \mathcal I_{\xi};\\
M_\xi (I) &:=& \limsup_{n \rightarrow \infty} \frac{1}{n} \sum_{i=1}^n \eta_{\xi_i}(I_{\xi_i}), \quad \forall I \in \mathcal I_{\xi};
\end{eqnarray*}
where $I_{\xi_i} \in \mathcal I_{\xi_i}$ is defined as 
$$I_{\xi_i} := \text{ bounded interval of }\til{\mathcal F}^c_{\xi_i} \backslash \left( \left(\bigcup_{z \in \til{\mathcal F}^{s}(a_I)} \til{\mathcal F}^{u}(z) \right) \; \bigcup \;  \left(\bigcup_{z \in \til{\mathcal F}^{s}(b_I)} \til{\mathcal F}^{u}(z)\right)  \right)$$
where $a_I$ and $b_I$ are the bottom and top extreme points respectively of the interval $I$.  

\begin{lem}\label{lemma:min.max.ineq}
There exists $\gamma >1$ independent of the choice of $\xi$ and $\{\xi_n\}_n$ such that
 \begin{eqnarray*}
  \gamma^{-1} \lambda_{\xi}(I) \leq m_{\xi}(I) \leq M_{\xi}(I) \leq \gamma \lambda_{\xi}(I),
 \end{eqnarray*}
 for all $I \in \mathcal I_\xi$ small enough.
\end{lem}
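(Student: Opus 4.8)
The strategy is to prove the bound termwise — i.e. to sandwich $\eta_{\xi_i}(I_{\xi_i})$ between uniform multiples of $\lambda_\xi(I)$ for all large $i$ — and then push the estimate through the Ces\`aro averages defining $m_\xi$ and $M_\xi$. The uniform $\gamma$ will be assembled from three things already in hand: a uniform two-sided equivalence of $\eta_x$ with arc length, the quasi-isometry of $\tilc$, and the continuity of the center foliation.

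\emph{Step 1 (uniform two-sided control of $\eta_x$).} I would first upgrade Lemma~\ref{lemma:eta} to a genuine two-sided bound: there are constants $0<c_1<c_2$, depending only on $f$, with $c_1\lambda_x(J)\le\eta_x(J)\le c_2\lambda_x(J)$ for every $x\in A$ and every bounded interval $J\subset\tilc_x$ contained in some $\mathcal B_k$. Indeed, from $m_x^{\mathcal B_k}=\rho_k(x,\cdot)\,\widehat\lambda_{\mathcal B_k}(x,\cdot)$ with $\rho_k\in[K^{-1},K]$ (UBD) and $\widehat\lambda_{\mathcal B_k}(x,\cdot)(J)=\lambda_x(J)/\lambda_x(\tilc_x\cap\mathcal B_k)$, Lemma~\ref{lemma:growth.bound} yields $\eta_{x,k}(J)=\lambda^k m_x^{\mathcal B_k}(J)\in[(K\beta)^{-1},K\beta]\,\lambda_x(J)$ for every $J\subset\tilc_x\cap\mathcal B_k$. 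Since $\eta_{x,k}\le K\beta\,\lambda_x$ as measures on $\tilc_x$, the weak limit $\eta_x$ satisfies $\eta_x\le K\beta\,\lambda_x$ and is in particular non-atomic; hence for a fixed bounded $J\subset\mathcal B_k$ one gets $\eta_x(J)=\lim_i\eta_{x,k_{i(x)}}(J)\in[c_1,c_2]\lambda_x(J)$ with $c_1:=(K\beta)^{-1}$, $c_2:=K\beta$, i.e. $\eta_x=\rho_x\lambda_x$ with $\rho_x\in[c_1,c_2]$.

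\emph{Step 2 (the shadow $I_{\xi'}$ varies continuously).} Next I would show $\lambda_{\xi_i}(I_{\xi_i})\to\lambda_\xi(I)$ as $\xi_i\to\xi$, with error controlled only by the quasi-isometry constant $Q$ of $\tilc$. By \cite{hammerlindl.thesis} each stable--unstable surface $\bigcup_{z\in\tilc^s(a)}\tilc^u(z)$ is a complete transversal to $\tilc$ meeting every center leaf exactly once; let $c_i,d_i$ be the points where the surfaces through $a_I,b_I$ meet $\tilc^c(\xi_i)$, so $I_{\xi_i}$ is the center arc between them. Because $E^c$ is transverse to $E^s\oplus E^u$ and the center foliation is continuous, these intersection points depend continuously on the leaf, so $c_i\to a_I$, $d_i\to b_I$, hence $\|c_i-d_i\|\to\|a_I-b_I\|$. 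Quasi-isometry then gives $Q^{-1}\|c_i-d_i\|\le\lambda_{\xi_i}(I_{\xi_i})\le Q\|c_i-d_i\|$ and $Q^{-1}\|a_I-b_I\|\le\lambda_\xi(I)\le Q\|a_I-b_I\|$, so that
$$\liminf_{i\to\infty}\lambda_{\xi_i}(I_{\xi_i})\ \ge\ Q^{-2}\lambda_\xi(I)\,,\qquad\limsup_{i\to\infty}\lambda_{\xi_i}(I_{\xi_i})\ \le\ Q^{2}\lambda_\xi(I)\,.$$
Taking $I$ small enough also keeps $I_{\xi_i}$ inside some $\mathcal B_k$ for large $i$, so Step 1 applies to $I_{\xi_i}$.

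\emph{Step 3 (averaging), and where the difficulty lies.} Combining Steps 1--2, $c_1\lambda_{\xi_i}(I_{\xi_i})\le\eta_{\xi_i}(I_{\xi_i})\le c_2\lambda_{\xi_i}(I_{\xi_i})$ for all large $i$, whence $\liminf_i\eta_{\xi_i}(I_{\xi_i})\ge c_1Q^{-2}\lambda_\xi(I)$ and $\limsup_i\eta_{\xi_i}(I_{\xi_i})\le c_2Q^2\lambda_\xi(I)$. For any bounded nonnegative sequence $(x_i)$ with $\liminf x_i\ge a$ and $\limsup x_i\le b$ one has $\liminf_n\tfrac1n\sum_{i=1}^n x_i\ge a$ and $\limsup_n\tfrac1n\sum_{i=1}^n x_i\le b$ (the finitely many initial terms contribute $0$ to the Ces\`aro limit). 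Applying this with $x_i=\eta_{\xi_i}(I_{\xi_i})$ gives $m_\xi(I)\ge c_1Q^{-2}\lambda_\xi(I)$ and $M_\xi(I)\le c_2Q^2\lambda_\xi(I)$, and $m_\xi(I)\le M_\xi(I)$ is automatic; so $\gamma:=\max\{2,\,c_2Q^2,\,Q^2c_1^{-1}\}$, which depends only on $K,\beta,Q$, i.e. only on $f$, works. The delicate point is Step 2: one has to know that the stable--unstable surfaces are genuinely single-valued complete transversals to the center foliation — this is exactly the quasi-isometry input of \cite{hammerlindl.thesis} — so that $I_{\xi'}$ is well defined and depends continuously on $\xi'$, and one must be careful that ``$I$ small enough'' keeps the shadow inside the region where $\eta$ has been controlled. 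It is worth stressing that no injectivity of $h$ along center leaves is used, which is reassuring since that is part of what the whole argument is ultimately proving.
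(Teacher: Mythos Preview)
Your argument is correct, and the route differs from the paper's in one substantive way. Both proofs share the same skeleton: (i) $\eta_{\xi_i}$ has density in $[c_1,c_2]$ with respect to arc length (your Step~1, the paper's appeal to Lemma~\ref{lemma:eta}), and (ii) the length of the shadow interval $I_{\xi_i}$ is uniformly comparable to $\lambda_\xi(I)$; then (iii) one passes to the Ces\`aro limit. The divergence is in (ii). The paper obtains the length comparison by invoking \cite[Theorem~B]{holder.foliations}: the stable holonomy inside center--stable leaves and the unstable holonomy inside center--unstable leaves are uniformly $C^1$, so the map $I\mapsto I_{\xi_i}$ has uniformly bounded Jacobian and $\lambda_{\xi_i}(I_{\xi_i})/\lambda_\xi(I)$ is sandwiched directly. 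You instead use only the global product structure and the quasi-isometry constant $Q$ of $\tilc$ (both already imported from \cite{hammerlindl.thesis}) together with continuity of the foliations, obtaining the coarser but adequate bounds $Q^{-2}\le \liminf \lambda_{\xi_i}(I_{\xi_i})/\lambda_\xi(I)$ and $\limsup\le Q^2$. Your approach is more self-contained---it avoids the heavy external $C^1$-holonomy input and uses only tools already on the table---at the price of a larger $\gamma$ and a slightly more delicate passage through $\liminf/\limsup$ and Ces\`aro averages (which you handle correctly, since the sequence $\eta_{\xi_i}(I_{\xi_i})$ is bounded). The paper's route is shorter once one accepts the cited holonomy regularity.
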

\begin{proof}
By \cite[Theorem B]{holder.foliations} the unstable holonomy inside center-unstable leaf and the stable holonomy inside the center-stable leaf is uniformly $C^1$. The uniformity is defined for interval $I$ with a uniform bounded length and the holonomies are uniformly $C^1$ if we consider then to be close of some uniformly bounded distance. And from Lemma \ref{lemma:eta} we know that $\eta_{\xi_i}$ have uniformly bounded densities, hence the lemma follows.
 \end{proof}

Since $F(\mathcal I_\xi) = \mathcal I_{F(\xi)}$ it makes sense to state
\begin{lem}
For all $I \in \mathcal I_{F^n(\xi)}$ small enough
 \begin{eqnarray*}
  F_*^n m_{\xi}(I) &=& \lambda^{-n} m_{F^n(\xi)}(I);\\
  F_*^n M_{\xi}(I) &=& \lambda^{-n} M_{F^n(\xi)}(I).
 \end{eqnarray*}
\end{lem}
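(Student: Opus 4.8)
The plan is to push the transformation law $F_* \eta_x = \lambda^{-1} \eta_{F(x)}$ from Lemma \ref{lemma:eta} through the Cesàro averages defining $m_\xi$ and $M_\xi$. First I would fix $\xi$ and a sequence $\{\xi_n\} \to \xi$ along which all the $\eta_{\xi_n}$ are defined, and observe that $\{F(\xi_n)\} \to F(\xi)$ is then a valid defining sequence for $m_{F(\xi)}$ and $M_{F(\xi)}$, because $F$ is a homeomorphism and the $\eta_{F(\xi_n)}$ are defined whenever the $\eta_{\xi_n}$ are (the set $A$ is $F$-invariant). By induction it suffices to treat $n=1$; the general case then follows by iterating. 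So I would aim to show $F_* m_\xi(I) = \lambda^{-1} m_{F(\xi)}(I)$ and the analogous identity for $M_\xi$, for all sufficiently small $I \in \mathcal I_{F(\xi)}$.

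The key computation is as follows. By definition, for $I \in \mathcal I_{F(\xi)}$,
\begin{equation*}
F_* m_\xi(I) = m_\xi(F^{-1}(I)) = \liminf_{n\to\infty} \frac 1n \sum_{i=1}^n \eta_{\xi_i}\bigl( (F^{-1}(I))_{\xi_i} \bigr).
\end{equation*}
The point is that the "combing" operation $J \mapsto J_{\xi_i}$ that transports an interval of $\mathcal F^c_{F(\xi)}$ to one of $\mathcal F^c_{\xi_i}$ via the stable/unstable pseudo-holonomy commutes with $F^{-1}$: since $F$ preserves $\mathcal F^s$, $\mathcal F^u$ and $\mathcal F^c$ and their orientations, the endpoints $a_{F^{-1}(I)}, b_{F^{-1}(I)}$ are exactly $F^{-1}(a_I), F^{-1}(b_I)$, and the cylinders $\bigcup_{z\in\mathcal F^s(a)}\mathcal F^u(z)$ are $F$-invariant as a family, so $(F^{-1}(I))_{\xi_i} = F^{-1}( I_{F(\xi_i)} )$. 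Hence
\begin{equation*}
\eta_{\xi_i}\bigl( (F^{-1}(I))_{\xi_i}\bigr) = \eta_{\xi_i}\bigl(F^{-1}(I_{F(\xi_i)})\bigr) = (F_*\eta_{\xi_i})\bigl(I_{F(\xi_i)}\bigr) = \lambda^{-1}\eta_{F(\xi_i)}\bigl(I_{F(\xi_i)}\bigr),
\end{equation*}
using Lemma \ref{lemma:eta} in the last step. Summing, dividing by $n$ and taking $\liminf$ gives $F_* m_\xi(I) = \lambda^{-1} m_{F(\xi)}(I)$; replacing $\liminf$ by $\limsup$ gives the statement for $M_\xi$. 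Iterating $n$ times and absorbing the factors $\lambda^{-1}$ yields $F^n_* m_\xi(I) = \lambda^{-n} m_{F^n(\xi)}(I)$ and likewise for $M_\xi$.

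The main obstacle I anticipate is the bookkeeping around the "smallness" of $I$ and the compatibility of the defining sequences. The intervals $I_{\xi_i}$ are only well-defined and well-behaved (e.g.\ actually bounded, with the pseudo-holonomy landing where expected) when $I$ is small enough relative to the uniform scales in Lemma \ref{lemma:min.max.ineq} and in the quasi-isometry/Hölder-foliation estimates; one must check that applying $F^{-1}$ does not destroy this, which is fine since $F^{-1}$ is Lipschitz with a fixed constant on the relevant compact scale, at the cost of possibly shrinking the admissible size of $I$ by a uniform factor. One must also make sure the subsequence choices in the definition of $\eta_x$ (the set $W$ of orbit representatives) are consistent along the whole orbit of $\xi$ — but this is exactly what the Cantor diagonal construction in the proof of Lemma \ref{lemma:eta} arranges, so $\eta_{F^n(\xi_i)}$ is genuinely the weak limit of $\eta_{F^n(\xi_i), k_{j} + n}$ along one common subsequence, and the identity $F_*\eta_{y} = \lambda^{-1}\eta_{F(y)}$ is available at every point of the orbit. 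Granting that, the argument is just unwinding definitions.
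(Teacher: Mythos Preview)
Your proposal is correct and follows essentially the same route as the paper: the core is the commutation identity $(F^{-n}(I))_{\xi_i} = F^{-n}(I_{F^n(\xi_i)})$, coming from $F$-invariance of the stable, unstable and center foliations, combined with $F_*\eta_{\xi_i} = \lambda^{-1}\eta_{F(\xi_i)}$ from Lemma~\ref{lemma:eta}, pushed through the Ces\`aro $\liminf/\limsup$. The only cosmetic difference is that you reduce to $n=1$ and iterate, whereas the paper writes the computation for general $n$ in one block; your explicit remark that $\{F(\xi_i)\}$ serves as the defining sequence for $m_{F(\xi)}$ is a point the paper leaves implicit.
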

\begin{proof}
We prove the result for $m_{\xi}$ as the case $M_{\xi}$ is analogous. Let  $I \in \mathcal I_{F^n(\xi)}$. Since the stable and unstable foliations are $F$-invariant and $F$ preserves intervals on center leaves we have $( F^{-n}(I) )_{\xi_i}  = F^{-n}(I_{F^n(\xi_i)})$
then
\begin{eqnarray*}
 F_*^n m_{\xi}(I)  =   m_\xi (F^{-n}(I)) & = &  \liminf_{m \rightarrow \infty} \frac{1}{m} \sum_{i=1}^m m_{\xi_i}( ( F^{-n}(I) )_{\xi_i}  )\\
 & = & \liminf_{m \rightarrow \infty} \frac{1}{m} \sum_{i=1}^m m_{\xi_i}(F^{-n}(I_{F^n(\xi_i)}))\\
  & = &  \liminf_{m \rightarrow \infty} \frac{1}{m} \sum_{i=1}^m \lambda^{-n} m_{\xi_i}(I_{F^n(\xi_i)})\\
  & = & \lambda^{-n} m_{F^n(\xi)}(I).
\end{eqnarray*}
 \end{proof}

Now let us see that the center Lyapunov exponent is defined everywhere and equals $log (\lambda)$. Using the above two lemmas we have that for a given $n$ 
\begin{eqnarray*}
\gamma^{-1} F^n_* \lambda_\xi (I) &  = &  F^n_* (\gamma^{-1}\lambda_\xi) (I) \leq F^n_* (m_\xi) (I)\\
& \leq & \lambda^{-n} m_{F^n(\xi)}(I) \leq \lambda^{-n} M_{F^n(\xi)}(I) \leq \gamma F^n_* \lambda_\xi (I).
\end{eqnarray*}

Dividing it all by $F^n_* \lambda_\xi (I)$ and applying $\frac{1}{n} log$, then
$$\frac{1}{n}log(\gamma^{-1})  \leq \frac{1}{n}log \left( \lambda^{-n} \frac{m_{F^n(\xi)}(I)}{F^n_* \lambda_\xi (I)}\right) \leq  \frac{1}{n}log\left( \lambda^{-n} \frac{ M_{F^n(\xi)}(I)}{F^n_* \lambda_\xi (I)} \right)\leq \frac{1}{n}log(\gamma).$$

Now let we shrink the interval $I$. Let $x\in I$ and define $I_\epsilon$ as the interval inside the center foliation of radius $\epsilon$ centered on $x$. Therefore

\begin{eqnarray}\label{eq:cal.exp}
 \lim_{\epsilon \rightarrow 0} \lambda^{-n} \frac{m_{F^n(\xi)}(I_\epsilon)}{F^n_* \lambda_\xi (I_\epsilon)} & = &\lim_{\epsilon \rightarrow 0} \lambda^{-n} \frac{m_{F^n(\xi)}(I_\epsilon)}{\lambda_\xi (F^{-n}(I_\epsilon))}\\
 & = & \lim_{\epsilon \rightarrow 0} \lambda^{-n} \frac{\int_{I_\epsilon} \rho_{F^n(\xi)} d\lambda_{F^n(\xi)}}{\lambda_\xi (F^{-n}(I_\epsilon))}.\nonumber
\end{eqnarray}

But we know that $m_{F^n(\xi)} (=\rho_{F^n(\xi)} \lambda_{F^n(\xi)})$ has bounded density with respect to the Lebesgue measure, then for some universal constant $\beta$

$$ \beta^{-1} \int_{I_\epsilon} d\lambda_{F^n(\xi)} \leq \int_{I_\epsilon} \rho_{F^n(\xi)} d\lambda_{F^n(\xi)} \leq  \beta \int_{I_\epsilon}  d\lambda_{F^n(\xi)}.$$

The two above estimates imply that 
\begin{eqnarray*}
  \beta^{-1} \lambda^{-n} ||DF^{-n}|\til{E^c}(x)||^{-1} & \leq & \lim_{\epsilon \rightarrow 0} \lambda^{-n} \frac{\int_{I_\epsilon} \rho_{F^n(\xi)} d\lambda_{F^n(\xi)}}{\lambda_\xi (F^{-n}(I_\epsilon))}\\ & \leq & \beta \lambda^{-n} ||DF^{-n}|\til{E^c}(x)||^{-1}.
\end{eqnarray*}

Hence applying $\frac{1}{n}log$ and passing to the limit as $n$ goes to infinity we get

\begin{eqnarray*}
 -log \lambda - \lim_{n \rightarrow \infty}\frac{1}{n}||DF^{-n}|\til{E^c}(x) || & \leq & \lim_{n\rightarrow \infty} \lim_{\epsilon \rightarrow 0} \lambda^{-n} \frac{\int_{I_\epsilon} \rho_{F^n(\xi)} d\lambda_{F^n(\xi)}}{\lambda_\xi (F^{-n}(I_\epsilon))}\\ & \leq  & -log \lambda - \lim_{n \rightarrow \infty}\frac{1}{n}||DF^{-n}|\til{E^c}(x) ||.
\end{eqnarray*}

From equation (\ref{eq:cal.exp}) take $I$ to be $I_\epsilon$ as defined above and passing to the limit as $n$ goes to infinity we get

$$0 \leq -log \lambda - \lim_{n \rightarrow \infty}\frac{1}{n}||DF^{-n}|\til{E^c}(x) || \leq 0.$$

This implies that
\begin{eqnarray}\label{eq:lyapunov.center}
\lim_{n \rightarrow \infty}\frac{1}{-n}||DF^{-n}|\til{E^c}(x) || = log \lambda, \; \forall x \in \mathbb R^3.
\end{eqnarray}

The above means that the center Lyapunov exponent of $F^{-1}$ equals $log(\lambda^{-1})$ for every point. In particular because $\pi:\mathbb R^3 \rightarrow \mathbb T^3$ is a local isometry and $\pi \circ F^{-1} = f^{-1} \circ \pi$ the center Lyapunov exponent of $f^{-1}:\mathbb T^3 \rightarrow \mathbb T^3$ is defined for every point in $\mathbb T^3$ and equals $log(\lambda^{-1})$.

\begin{lem}
 The diffeomorphism $f$ is in fact an Anosov diffeomorphism.
\end{lem}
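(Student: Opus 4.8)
\noindent\emph{Proof sketch.} The plan is to promote the pointwise identity \eqref{eq:lyapunov.center} to a \emph{uniform} exponential expansion of the one–dimensional center bundle, and then to absorb $E^c$ into $E^u$ to produce a genuine hyperbolic splitting $T\mathbb T^3=E^s\oplus(E^c\oplus E^u)$.

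First I would set $\psi(x):=\log\|D_xf|_{E^c}\|$, a continuous function on $\mathbb T^3$ which is bounded away from $\pm\infty$ (since $f$ is a diffeomorphism and $E^c$ is a $Df$–invariant line), so that $\log\|D_xf^n|_{E^c}\|=\sum_{i=0}^{n-1}\psi(f^i(x))$. By the discussion following \eqref{eq:lyapunov.center}, the center Lyapunov exponent of $f^{-1}$ equals $-\log\lambda$ at every point of $\mathbb T^3$, which rewrites as $\frac1n\sum_{j=1}^{n}\psi(f^{-j}(x))\to\log\lambda$ for every $x$. As $\psi$ is bounded, for any $f$–invariant probability $\mu$ the bounded convergence theorem together with the invariance of $\mu$ gives $\int\psi\,d\mu=\int\bigl(\lim_n\tfrac1n\sum_{j=1}^n\psi\circ f^{-j}\bigr)d\mu=\log\lambda>0$; that is, $\int\psi\,d\mu=\log\lambda$ for \emph{every} $f$–invariant measure.

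The key step is then a standard compactness argument turning this into a uniform bound. Put $b_n:=\inf_{x\in\mathbb T^3}\sum_{i=0}^{n-1}\psi(f^i(x))$; taking infima over tail orbits shows $b_{n+m}\ge b_n+b_m$, so by Fekete's lemma $b_n/n\to\ell:=\sup_n b_n/n$, with $\ell<\infty$ because $b_n\le n\max\psi$. On one hand $b_n/n\le\int\psi\,d\mu=\log\lambda$ for every invariant $\mu$, so $\ell\le\log\lambda$. On the other hand, choosing $x_n$ with $\sum_{i=0}^{n-1}\psi(f^i(x_n))\le b_n+1$ and passing to a weak-$*$ limit $\mu_0$ of the empirical measures $\frac1n\sum_{i=0}^{n-1}\delta_{f^i(x_n)}$, one gets an invariant measure with $\int\psi\,d\mu_0=\ell$, whence $\ell=\log\lambda>0$. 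Consequently there are $N\in\mathbb N$ and $c>0$ with $\|D_xf^N|_{E^c}\|\ge e^{cN}$ for all $x$; splitting a general $n$ into blocks of length $N$ and controlling the remainder by the global bounds on $\|Df|_{E^c}\|$ yields constants $C>0$ and $\mu:=e^{c}>1$ with $\|D_xf^n|_{E^c}\|\ge C\mu^n$ for all $x\in\mathbb T^3$ and all $n\ge0$: the center bundle is uniformly expanding.

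Finally, since the splitting $E^s\oplus E^c\oplus E^u$ is continuous on the compact manifold $\mathbb T^3$, the angles between $E^c$ and $E^u$ and between their forward iterates are bounded below; together with the uniform expansion of $E^c$ just obtained and of $E^u$ from partial hyperbolicity, this makes $E^{cu}:=E^c\oplus E^u$ a $Df$–invariant, uniformly expanding subbundle. As $E^s$ is uniformly contracted by hypothesis, $T\mathbb T^3=E^s\oplus E^{cu}$ is a hyperbolic splitting, so $f$ is Anosov. The main obstacle is the third step — extracting uniform expansion from a merely pointwise Lyapunov exponent (equivalently, from the fact that every invariant integral of $\psi$ equals $\log\lambda$); the rest is routine cocycle bookkeeping with the dominated splitting.
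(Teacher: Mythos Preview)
Your argument is correct and reaches the same conclusion as the paper, but by a genuinely different route. The paper argues directly on the manifold: from the pointwise limit it picks, for each $x$, a time $n_x$ and a neighborhood $\mathcal U_x$ on which $\|Df^{n_x}|_{E^c}\|\ge e^{n_x(\log\lambda-\varepsilon)}$, extracts a finite subcover by compactness of $\mathbb T^3$, absorbs the finitely many initial iterates into a constant $C$, and concludes $\|Df^n|_{E^c}\|\ge C e^{n(\log\lambda-\varepsilon)}$ for all $n$. You instead pass through invariant measures: from the pointwise backward exponent and bounded convergence you get $\int\psi\,d\mu=\log\lambda$ for every invariant $\mu$, and then the superadditivity/Fekete plus empirical-measure argument identifies $\lim_n\inf_x\tfrac1n S_n\psi(x)=\log\lambda$. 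The paper's route is shorter and more elementary; yours is the standard ``uniform hyperbolicity from positive exponent for every invariant measure'' mechanism (in the spirit of Ma\~n\'e/Cao), which is slightly heavier but more portable. One small bonus of your approach is that it cleanly bridges the gap between what \eqref{eq:lyapunov.center} literally gives (the \emph{backward} exponent of $f$, i.e.\ the exponent of $f^{-1}$) and the \emph{forward} uniform expansion you need: the paper's proof opens by asserting $\lim_n\tfrac1n\log\|Df^n|_{E^c}(x)\|=\log\lambda$, which is the forward statement and not exactly what was derived, whereas your invariant-measure step makes the forward conclusion automatic.
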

\begin{proof}
We know that $\lim_{n \rightarrow \infty} ||Df^n|E^c_f||=log(\lambda), \forall x \in \mathbb T^3$. Consider $\varepsilon >0$ satisfying $\lambda_\varepsilon := \lambda - \varepsilon > 0$. Since the center exponent exists for every $x$ then, given $x \in \mathbb T^3$, there are $n_x \in \mathbb N$ and a neighborhood $\mathcal U_x$ of $x$ such that $\forall x \in \mathcal U_x$ $|Df^{n_x}|E^c| \geq e^{n_x \lambda_\varepsilon}$. Since $\mathbb T^3$ is a compact manifold take a finite cover $\mathcal U_{x_1} \ldots \mathcal U_{x_l}$. Let $C_i<1$ be small enough so that for $x \in \mathcal U_{x_i}$ then $|Df^{n}(x)|E^c| \geq C_{x_i} e^{n \lambda_\varepsilon}$ for all $n \in \{ 0, 1, \ldots, n_{x_i}\}$. Let $C:= \min_i \; C_{x_i}$, we then have that $|Df^{n}(x)|E^c| \geq C e^{n \lambda_\varepsilon}$ for all $x \in \mathbb T^3$ and $n \in \mathbb N$. Hence the center foliation is expanding, therefore $f$ is in fact an Anosov diffeomorphism.
 \end{proof}

\begin{lem}
 $f$ is $C^\infty$ conjugate to its linearization.
\end{lem}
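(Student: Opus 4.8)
The plan is to leverage what has already been established: $f$ is a conservative Anosov diffeomorphism on $\mathbb T^3$ with a partially hyperbolic splitting $E^s\oplus E^c\oplus E^u$ in which the center direction is now uniformly expanding, and moreover the center Lyapunov exponent equals $\log\lambda$ \emph{at every point}, where $\lambda$ is the center eigenvalue of the linearization $A$. Since $f$ is Anosov on $\mathbb T^3$ and homotopic to the hyperbolic automorphism $A$, the Franks--Manning semi-conjugacy $h$ with $A\circ h=h\circ f$ is automatically a homeomorphism (an Anosov diffeomorphism on a torus is topologically conjugate to its linearization). So $f$ is already topologically conjugate to $A$ via $h$; the entire content of the lemma is to upgrade this conjugacy from $C^0$ to $C^\infty$.

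The key mechanism is the matching of Lyapunov data along periodic orbits together with a smooth-rigidity theorem for Anosov systems. For each periodic point $p$ of $f$ of period $N$, the conjugacy $h$ sends $p$ to a periodic point of $A$ of the same period, and the eigenvalues of $Df^N$ along the stable, center (now strong-unstable-type but one-dimensional weak) and strong-unstable directions must, by the de la Llave--Marco--Moriy\'on / Journ\'e type theory, match the corresponding eigenvalues of $A^N$ \emph{if} the conjugacy is to be smooth; conversely, by the work of de la Llave and of Gogolev (smooth rigidity for Anosov diffeomorphisms on $\mathbb T^3$), if the periodic data of $f$ coincides with that of $A$ along all three bundles, then $h$ is $C^\infty$. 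The stable and strong-unstable periodic data are handled by the standard argument: the sum of stable exponents and the sum of unstable exponents are pinned by the fact that $f$ is conservative (so $Df$ has determinant of modulus one, as does $A$) and one-dimensional in the relevant bundles; the delicate bundle is $E^c$. Here we invoke Equation (\ref{eq:lyapunov.center}): the center exponent of $f$ equals $\log\lambda$ at \emph{every} point, in particular at every periodic point, so the center periodic data of $f$ matches that of $A$ exactly. This is precisely the extra input that the UBD hypothesis bought us and that makes the smooth rigidity theorem applicable.

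Concretely, I would carry out the following steps. First, record that $f$ being Anosov on $\mathbb T^3$ and homotopic to $A$ forces $h$ to be a bi-H\"older homeomorphism conjugating $f$ to $A$, and that $h$ carries $\mathcal F^s_f,\mathcal F^c_f,\mathcal F^u_f$ to $\mathcal F^s_A,\mathcal F^c_A,\mathcal F^u_A$ respectively (using that $h$ sends center leaves to center leaves, as already cited from \cite{ures}). Second, fix a periodic point $p=f^N(p)$ and compare eigenvalues of $Df^N$ at $p$ with those of $A^N$: along $E^u$ (one-dimensional strong unstable) and $E^s$ (one-dimensional stable) the product over the orbit is controlled, and along $E^c$ Equation (\ref{eq:lyapunov.center}) applied to the periodic orbit gives exactly $\lambda^{N}$. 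Hence the periodic data of $f$ equals the periodic data of $A$ in all bundles. Third, invoke the Liv\v sic-type regularity theorem for conjugacies of Anosov diffeomorphisms with matching periodic data (de la Llave's theorem, in the form available on $\mathbb T^3$, e.g.\ \cite{ures} and the references on smooth rigidity therein, or Gogolev's result) to conclude that $h$ is $C^\infty$.

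The main obstacle is Step 2 --- making rigorous that coincidence of the \emph{center} periodic exponents really does follow from the pointwise identity (\ref{eq:lyapunov.center}), and that this, combined with the (easier) stable/unstable data, is enough to trigger smooth rigidity rather than merely $C^1$. The pointwise center exponent identity is stronger than a periodic-orbit identity, so the periodic comparison is immediate once one notes that a periodic orbit carries a well-defined exponent; the subtler point is citing the correct smooth-rigidity input for three-dimensional Anosov diffeomorphisms (where the stable bundle may fail to be smooth but is one-dimensional, so the relevant matching theorems do apply). I would therefore phrase Step 3 as a direct appeal to the literature, emphasizing that the one-dimensionality of all three sub-bundles is what allows one to pass from matched periodic data to $C^\infty$ conjugacy, and that the conservativity of $f$ disposes of any normalization ambiguity in the stable and unstable data.
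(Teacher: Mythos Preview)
Your overall architecture --- match the periodic Lyapunov data in all three bundles and then invoke a smooth-rigidity theorem --- is the same as the paper's. The center matching via Equation~(\ref{eq:lyapunov.center}) is exactly right. But Step~2 for the remaining two bundles has a genuine gap.

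You assert that the stable and strong-unstable periodic data are the ``easier'' part, handled by conservativity and one-dimensionality. That is not enough. Volume preservation gives only $\lambda^s_f(p)+\lambda^c_f(p)+\lambda^u_f(p)=0$; combined with $\lambda^c_f(p)=\log\lambda$ this pins down the \emph{sum} $\lambda^s_f(p)+\lambda^u_f(p)$, not the two numbers separately. One-dimensionality of $E^s$ and $E^u$ buys nothing further here: there exist conservative Anosov diffeomorphisms on $\mathbb T^3$ whose periodic data in the strong directions does not match the linear model. The paper closes this gap by invoking Gogolev's theorem \cite{gogolev} in a different way than you do: not as the final ``periodic data $\Rightarrow$ smooth'' step, but as the statement that \emph{absolute continuity of the weak-unstable (i.e.\ center) foliation of a $3$-dimensional Anosov forces $\lambda^u_f(p)=\lambda^u_A$ at every periodic point}. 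This is exactly where the UBD hypothesis (hence absolute continuity of $\mathcal F^c$) is used a second time. Only after that does conservativity yield $\lambda^s_f(p)=\lambda^s_A$.

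A secondary difference: once all periodic data match, the paper does not jump straight to $C^\infty$. It first obtains $C^1$ conjugacy from \cite[Proposition~1.1]{varao.etds}, and then upgrades to $C^\infty$ by a bootstrap trick: choose a $C^\infty$ diffeomorphism $h_0$ that $C^1$-approximates $h$, so that $h_0\circ f\circ h_0^{-1}$ is $C^1$-close to $A$ and $C^1$-conjugate to it via $h\circ h_0^{-1}$; then apply \cite{bootstrap} in that perturbative regime to conclude $h\circ h_0^{-1}$, and hence $h$, is $C^\infty$. Your direct appeal to a global ``matching periodic data $\Rightarrow C^\infty$'' theorem on $\mathbb T^3$ may well be available in the literature, but you should cite it precisely rather than fold it into the same reference you need for the unstable-data step.
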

\begin{proof}
Let us first prove that
 $$\lambda_f^s(x) = \lambda_A^s;\; \lambda_f^c(x) = \lambda_A^c;\; \lambda_f^u(x) = \lambda_A^u; \; \; \forall x \in Per(f),$$
 where $\lambda^*_{g}$ is the Lyapunov exponent in the direction $* \in \{s,c,u\}$ of the map $g \in \{f, A\}$. From Equation (\ref{eq:lyapunov.center}) we know that $\lambda_f^c(x) = \lambda_A^c$ on any periodic point $x \in \mathbb T^3$ for $f$. Since the center foliation is absolutely continuous and $f$ is an Anosov diffeomorphism, then by \cite{gogolev} we get that $\lambda_f^u(x) = \lambda_A^u$. Since $f$ and $A$ are volume preserving then $\lambda_f^s(x) + \lambda_f^c(x)+\lambda_f^u(x)=0$ and $\lambda_A^s(x) + \lambda_A^c(x)+\lambda_A^u(x)=0$, which implies that $\lambda_f^s(x) = \lambda_A^s$ for all periodic point $x \in \mathbb T^3$ of $f$. Hence if the Lyapunov exponents are constant on periodic points for any direction, then $f$ is $C^{1}$ conjugate to its linearization by \cite[Proposition 1.1]{varao.etds}.

Recall that $f= h^{-1}\circ A \circ h$. Take $h_0:\mathbb T^3 \rightarrow \mathbb T^3$ a $C^\infty$ diffeomorphism $C^1$ close to $h$. Note that $$h_0 \circ f \circ h_0^{-1} = h_0 \circ h^{-1} \circ A \circ h \circ h_0^{-1} = (h \circ h_0^{-1})^{-1} \circ A \circ (h\circ h_0^{-1}).$$


Then $h_0 \circ f \circ h_0^{-1}$ is $C^1$ close to $A$ since $(h \circ h_0^{-1})^{-1}$ and $(h\circ h_0^{-1})$ are close to the identity.  Observe that $h\circ h_0$ is a $C^1$ conjugacy between $h_0 \circ f \circ h_0^{-1}$ and $A$, then it naturally satisfies the smooth conjugacy hypothesis from \cite{bootstrap}, therefore we obtain that $h_0 \circ f \circ h_0^{-1}$ is $C^\infty$ conjugate to $A$. Now because $h_0$ and $h\circ h_0^{-1}$ are $C^\infty$ then $h$ is $C^\infty$ as we wanted to show. 
 \end{proof}

We have proven above that UBD property implies $C^\infty$ conjugacy. It is easy to see that the converse is true. The theorem is now proven.
 \end{proof}

\begin{rem}\label{rem}
It is worth to note that the construction of the measures $\eta_x$ (Lemma \ref{lemma:eta}) on the universal cover are used simply as a tool to obtain information on the Lyapunov exponent. Although the measures $\eta_x$ are intimately connect with the Rohklin disintegration of volume on the center foliations, they lack a very important property, they do not \textit{a priori} vary measurably. In our case we did not need to chose them in order to vary in such a way. Besides that, these measures really live on the universal cover, i.e. we cannot project them coherently on $\mathbb T^3$: two measures $\eta_x$ and $\eta_y$ in $\tilc_x$ and $\tilc_y$ respectively and such that $\pi(x)=\pi(y)$, then most likely satisfies $\pi_*\eta_x\neq \pi_*\eta_y$. But because we know that in the end $f$ is smoothly conjugate to its linearization we can induce the length measure from the linearization to the dynamics of $f$ and obtain such dynamically defined measures varying measurably. Hence, an interesting \textbf{program to tackle rigidity problems} related to invariant foliations seems to better understand partially hyperbolic dynamics which admits a measurably dynamically defined measures (e.g. $f_*\mu_x=\lambda \mu_{f(x)}$). With this generality one should not expect to obtain smooth rigidity results a priori, but one should start by obtaining rigidity results from the measurable point of view. That would most likely give implications on Lyapunov exponent, entropy, metric isomorphisms and so on.
\end{rem}

\textit{Acknowledgment:} The author would like to thank Andrey Gogolev for some comments concerning smooth conjugacy and Ali Tahzibi for some comments on the UBD property.

\bibliography{regisvarao.bib}
\bibliographystyle{ijmart}

IMECC-UNICAMP\\
Rua S\'ergio Buarque de Holanda, 651\\
Campinas, SP, Brazil, CEP 13083-859\\

\textit{Email address:} \textbf{regisvarao@ime.unicamp.br}

\end{document}